\newtheorem{theorem}{Theorem}
\newtheorem*{theorem*}{Theorem}
\newtheorem{lemma}[theorem]{Lemma}
\let\leq\leqslant
\let\geq\geqslant
\let\setminus\smallsetminus
\let\grminus\setminus
\newcommand{\brac}[1]{{\left(#1\right)}}
\newcommand{\set}[1]{\left\{#1\right\}}
\newcommand{\norm}[1]{{\left|#1\right|}}
\newcommand{\dir}[1]{{\vec{#1}}}
\newcommand{\Oh}[1]{O\brac{#1}}
\newcommand{\Nat}{\mathbb{N}}
\newcommand\ie{i.e\@ifnextchar.{}{.\@}}
\newcommand\etc{etc\@ifnextchar.{}{.\@}}
\newcommand\etal{et~al\@ifnextchar.{}{.\@}}
\newcounter{hackcount}
\newcommand{\hackcounter}[1]{\setcounter{hackcount}{\value{theorem}}\setcounterref{theorem}{#1}\addtocounter{theorem}{-1}}
\newcommand{\unhackcounter}{\setcounter{theorem}{\value{hackcount}}}
\begin{document}

\title{Plane graphs are facially-non-repetitively $10^{4 \cdot 10^7}$-choosable}

\author[G.~Gutowski]{Grzegorz Gutowski}

\address[G.~Gutowski]{Theoretical Computer Science Department, Faculty of Mathematics and Computer Science, Jagiellonian University, Krak\'ow, Poland}
\email{gutowski@tcs.uj.edu.pl}
\thanks{
Partially supported by Polish National Science Center grant 2016/21/B/ST6/02165. 
}

\begin{abstract}
A sequence $\brac{x_1,x_2,\ldots,x_{2n}}$ of even length is a repetition if $\brac{x_1,\ldots,x_n} = \brac{x_{n+1},\ldots,x_{2n}}$.
We prove existence of a constant $C < 10^{4 \cdot 10^7}$ such that given any planar drawing of a graph $G$, and a list $L(v)$ of $C$ permissible colors for each vertex $v$ in $G$, there is a choice of a permissible color for each vertex such that the sequence of colors of the vertices on any facial simple path in $G$ is not a repetition.

\end{abstract}

\maketitle

\section{Introduction}

For two real-valued functions $f$ and $g$ whose domains are cofinite subsets of $\Nat$, we write $f(k) = \Oh{g(k)}$ if there exist constants $n_0$ and $c$ such that $\norm{f(k)} \leq c\norm{g(k)}$ for all $k \geq n_0$.
All graphs considered in this paper are finite, undirected and contain no loops nor multiple edges.
Given a graph $G$, a \emph{planar drawing} of $G$ is a geometric representation of $G$ in the plane such that:
\begin{itemize}
  \item each vertex $v$ is drawn as a distinct point $p_v$,
  \item each edge $\set{u,v}$ is drawn as a simple curve connecting $p_u$ and $p_v$,
  \item no two edges intersect except at their common endpoints.
\end{itemize}
A graph is \emph{planar} if it admits a planar drawing.
A planar drawing of $G$ is a \emph{straight-line} drawing of $G$ if each edge is drawn as a segment.
Let $G$ be a connected planar graph.
A planar drawing of $G$ divides the plane into topologically connected regions, called \emph{faces}.
Exactly one face is an infinite region, and is called the \emph{external face}.
Each face $F$ is described by the cyclic order of vertices of $F$ as they are visited when the boundary of $F$ is traversed in the clockwise direction.
The description of all the faces determined by a planar drawing of $G$ and choice of the external face is a \emph{planar embedding} of $G$.
Two planar drawings of $G$ are \emph{equivalent} if they both determine the same planar embedding.
A \emph{plane} graph is a non-empty connected planar graph with a fixed planar drawing.

For a sequence $X = \brac{x_1,\ldots,x_n}$, and any $1 \leq i \leq j \leq n$, a sequence $X[i,j] = \brac{x_i,\ldots,x_j}$ is a \emph{block} of $X$.
A sequence $X = \brac{x_1,x_2,\ldots,x_{2n}}$ of even length is a \emph{repetition} if $X[1,n] = X[n+1,2n]$.
A sequence $X$ is \emph{non-repetitive} if no block of $X$ is a repetition.
The study of non-repetitive sequences was initiated by Thue~\cite{Thue06} who proved that there are arbitrarily long non-repetitive sequences with only three different elements.

A \emph{proper coloring} of a graph $G$ is a coloring of the vertices of $G$ such that no two endpoints of an edge of $G$ are colored the same.
A \emph{non-repetitive coloring} of $G$ is a coloring of the vertices of $G$ such that the sequence of colors of vertices on any simple path in $G$ is not a repetition.
In particular, any non-repetitive coloring of $G$ is a proper coloring of $G$.
The study of non-repetitive colorings was initiated by Alon \etal{}~\cite{AlonGHR02}.
They conjectured that every planar graph is non-repetitively $\Oh{1}$-colorable.
Currently, the best result supporting this conjecture is by Dujmovi{\'c} \etal{}~\cite{DujmovicFJW12} who proved that every planar graph on $n$ vertices is non-repetitively $\Oh{\log n}$-colorable.

A \emph{facial walk} in a plane graph $G$ is a walk that traverses consecutive vertices of the boundary of some face of $G$, and that traverses each edge at most once.
A \emph{facial path} in $G$ is a facial walk which is a simple path.
A \emph{facially-non-repetitive coloring} of a plane graph $G$ is a coloring of the vertices of $G$ such that the sequence of colors of vertices on any facial path in $G$ is not a repetition.
Bar{\'a}t and Czap~\cite{BaratC13} proved that every plane graph is facially-non-repetitively $24$-colorable.

A \emph{list assignment} of a graph $G$ is a mapping $L$ which assigns to each vertex $v$ of $G$ a set $L(v)$ of permissible colors.
For two list assignments $L$, and $M$ of the graph $G$, we write $M \subseteq L$ to denote that $M(v) \subseteq L(v)$, for each vertex $v$ in $G$.
An \emph{$L$-coloring} of $G$ is a coloring $c$ of vertices of $G$ such that $c(v) \in L(v)$ for every vertex $v$ in $G$.
We say that a list assignment $M$ of $G$ is proper (or non-repetitive, or facially-non-repetitive) if any $M$-coloring of $G$ is a proper, (or non-repetitive, or facially-non-repetitive) coloring of $G$. 

A list assignment $L$ of $G$ is a \emph{$k$-list assignment} of $G$ if $\norm{L(v)} \geq k$, for each vertex $v$ in $G$.
A graph $G$ is properly (or non-repetitively, or facially-non-repetitively) \emph{$k$-choosable} if for any $k$-list assignment $L$ of $G$ there is a proper (or non-repetitive, or facially-non-repetitive) $L$-coloring of $G$.
Fiorenzi \etal{}~\cite{FiorenziOMZ11} showed that for any constant $C$ there is a tree which is not non-repetitively $C$-choosable.
Przyby\l{}o \etal{}~\cite{PrzybyloSS16} proved that every plane graph of maximum degree $\Delta$ is facially-non-repetitively $\Oh{\Delta}$-choosable.
In this paper we improve this result and prove that every plane graph is facially-non-repetitively $\Oh{1}$-choosable.

We say that a graph $G$ is properly (or non-repetitively, or facially-non-repetitively) \emph{$\brac{k:m}$-choosable} if for any $k$-list assignment $L$ of $G$ there is a proper, (or non-repetitive, or facially-non-repetitive) $m$-list assignment $M \subseteq L$ of $G$.

Main result proved in this paper is the following.
\begin{theorem}\label{thm:main}\leavevmode\newline
  Every plane graph is facially-non-repetitively $\brac{\Oh{m^{{43046721}}}:m}$-choosable.
\end{theorem}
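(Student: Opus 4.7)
My plan is to prove Theorem~\ref{thm:main} via a recursive list-reduction argument. The $(k:m)$-choosability formulation is tailor-made for composition: a chain of implications of the form ``facially-non-repetitive $(k^c:k)$-choosability for a restricted class of plane graphs implies facially-non-repetitive $(k^c:k)$-choosability for a slightly larger class'' can be iterated, with the list-size exponent multiplying at each step. The precise exponent $3^{16}=43046721$ in the theorem strongly suggests a constant number (around $16$) of such compositions, each costing a factor of $3$ in the exponent, and I would design the proof around this layered structure.

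The first step is a Thue-type base lemma at the level of a single facial path: if a path $P$ carries a $k$-list assignment with $k=\Om{m^c}$ for some small constant $c$, then one can select an $m$-sublist at each vertex such that \emph{every} selection avoids repetitions along $P$. This is essentially a list-coloring version of Thue's theorem, and the natural tool is an entropy-compression / Moser--Tardos argument applied to list reductions rather than to full colorings. Once this is in hand, I would lift it to a single face boundary by applying it simultaneously to all facial paths contained in that boundary, and then to a bounded neighbourhood of a face by coupling the sublists chosen on incident face boundaries.

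The structural backbone of the proof will be a hierarchical decomposition of the plane graph that reduces the facial-non-repetitive choosability of $G$ to a few simpler instances, each handled by the previous level of the argument. Natural candidates are BFS layerings from the outer face, Schnyder-type canonical orderings, or a recursive splitting along separating facial cycles. Each transition between levels of the decomposition invokes the list-reduction lemma once, yielding one multiplicative factor in the exponent; the depth of the decomposition is what controls the total exponent and must be bounded by an absolute constant.

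The hard part will be coordinating the list reductions at vertices shared by many faces: the sublist $M(v)\subseteq L(v)$ must simultaneously kill repetitions on every facial path through $v$, while the lists on different neighbours of $v$ are being trimmed independently inside different faces. Controlling this coupling without paying more than a constant-exponent blow-up per level, and showing that a bounded number of levels suffice to exhaust the plane structure, is where the real combinatorial work lies and where the specific exponent $3^{16}$ will be pinned down.
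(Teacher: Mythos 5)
Your high-level plan --- iterate a path-level list-reduction lemma through a bounded-depth structural decomposition --- is the right genre, and you correctly identify both the base ingredient (a Thue-type list-reduction lemma for paths, which the paper takes from G\k{a}gol--Joret--Kozik--Micek, Theorem~\ref{thm:path}) and the central difficulty (a single vertex lies on arbitrarily many faces, so its filtered sublist must cooperate with all of them while the depth of the recursion stays bounded). But you stop exactly at the difficulty without resolving it, and the two devices the paper uses to resolve it are absent from your sketch.

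First, the paper does not use BFS layerings, Schnyder orders, or recursive splitting along separating cycles; none of these obviously gives a \emph{constant} number of levels, and that is precisely what you need. Instead it uses $(s,t)$-bipolar orientations (Theorem~\ref{thm:bipolar}): on each face $F$ the source $s(F)$ and sink $t(F)$ are declared \emph{special}, every other occurrence is \emph{regular}, and the Tamassia--Tollis structure of bipolar faces guarantees that every vertex is regular for at most two faces (Lemma~\ref{lem:drawing}, extended to non-$2$-connected graphs by induction over blocks). This is the key finiteness statement your proposal lacks: it converts ``a vertex may lie on many faces'' into ``a vertex is \emph{responsible} to at most two faces,'' which is what bounds the number of passes. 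Second, to run the per-face filtering (Lemma~\ref{lem:face}) on all faces without conflicts, the paper builds the planar auxiliary graph $H$ whose vertices are faces of $G$ and whose edges join faces sharing a regular vertex, $4$-colors $H$ by the Four Color Theorem, and processes each color class of faces simultaneously --- exactly four rounds. Your ``bounded number of levels'' is therefore not a free parameter to be discovered; it is pinned down as $4$ rounds of face filtering (each costing a factor of $81 = 3^4$ in the exponent, coming from one application of the walk lemma which nests the $O(m^9)$ path-filtering lemma twice), preceded by a constant-cost facial-square filtering step that makes vertices at facial distance $\le 2$ have disjoint lists. This last step is also missing from your outline and is needed both to rule out length-$2$ repetitions and to make the two special vertices harmless in the per-face argument. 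Your guess of ``about $16$ compositions each costing a factor of $3$'' is numerically consistent with $3^{16}$ but does not match the actual layering $81^4 = (3^4)^4$; more importantly, without the bipolar regular/special device and the $4$-color batching, there is no mechanism that caps the number of compositions at a constant in the first place.
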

The proof gives an explicit polynomial $f(m)$ of degree $3^{16}$ such that every plane graph is facially-non-repetitively $\brac{f(m):m}$-choosable.
When we compute the value of this polynomial for $m=1$ we get that every plane graph is facially-non-repetitively $10^{4 \cdot 10^7}$-choosable.

The proof is based on the following earlier results.
First ingredient is the famous Four Color Theorem.
\begin{theorem}[Appel, Haken, Koch~\cite{AppelH77,AppelHK77}]\label{thm:four_color}\leavevmode\newline
  Every plane graph is properly $4$-colorable.
\end{theorem}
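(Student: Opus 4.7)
The plan is to reduce first to a convenient subclass of plane graphs, then combine the classical discharging method with a reducibility argument. First, I would observe that it suffices to prove the theorem for maximal plane graphs, \ie{} plane triangulations with $n \geq 4$: any plane graph $G$ can be completed to a plane triangulation $G'$ by adding diagonals inside each non-triangular face, and a proper $4$-coloring of $G'$ restricts to a proper $4$-coloring of $G$. So from now on assume $G$ is a plane triangulation with minimum degree at least $3$ (isolated small cases are trivial).

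Next I would set up the two ingredients of the Appel--Haken strategy: an \emph{unavoidable set} $\mathcal{U}$ of \emph{configurations} (small subgraphs together with degree constraints on their vertices), and a proof that each configuration in $\mathcal{U}$ is \emph{reducible}. Here ``unavoidable'' means that every plane triangulation contains at least one member of $\mathcal{U}$ as a subgraph respecting the prescribed degrees, and ``reducible'' means that, given any plane triangulation $G$ containing the configuration $C$, one can construct a smaller plane triangulation $G^{*}$ such that every proper $4$-coloring of $G^{*}$ extends (possibly after Kempe chain recolorings) to a proper $4$-coloring of $G$. The theorem then follows by induction on the number of vertices: a minimal counterexample $G$ must contain some $C \in \mathcal{U}$, whose reducibility forces a smaller counterexample, contradicting minimality.

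To prove unavoidability I would use the discharging method. Start from Euler's formula $n - e + f = 2$; for a triangulation with $2e = 3f$ this gives $\sum_{v}(6 - \deg(v)) = 12$. Assign to each vertex $v$ the initial charge $6 - \deg(v)$, so the total charge is positive. Design a finite list of discharging rules that move charge from vertices of degree $\geq 7$ to vertices of degree $\leq 5$, and show that if none of the configurations in $\mathcal{U}$ appears then every vertex ends with non-positive final charge, contradicting conservation of total charge. The reducibility side is handled configuration by configuration, primarily through an analysis of colorings of the boundary ring of $C$ and the action of Kempe chain swaps on the ``ring colorings'' that do and do not extend inside~$C$.

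The main obstacle, and the reason this is not a short proof, is twofold: first, producing an unavoidable set whose every member is also reducible, and second, certifying reducibility of each configuration. In the original Appel--Haken work $\mathcal{U}$ contains roughly $1500$ configurations and the Kempe-chain/ring-coloring analysis for each is carried out by computer; subsequent work of Robertson, Sanders, Seymour, and Thomas reduced $\norm{\mathcal{U}}$ to $633$ but still relies on computer verification of both unavoidability (checking the discharging rules) and reducibility (enumerating ring colorings and their Kempe classes). I would therefore cite one of these checked lists rather than attempt to redo the verification by hand, treating the computer-checked catalogue of reducible configurations and the accompanying discharging analysis as a black box that completes the induction sketched above.
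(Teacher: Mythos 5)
The paper does not prove this statement; it is Theorem~\ref{thm:four_color}, cited as a black box from Appel, Haken, and Koch, and used later only as an auxiliary tool (to $4$-color the auxiliary graph $H$ on the faces of $G$). So there is no ``paper's own proof'' to compare against. Your sketch is a faithful high-level account of the actual Appel--Haken strategy (reduction to triangulations, unavoidable set of configurations via discharging from $\sum_v (6-\deg v)=12$, reducibility via Kempe-chain analysis of ring colorings), and you correctly identify that the only known way to close the argument is to defer to a computer-verified catalogue such as that of Robertson, Sanders, Seymour, and Thomas. In effect your proposal, like the paper, ends by treating the theorem as a citation; the extra exposition of the proof architecture is accurate but does not, and cannot by hand, replace the machine verification. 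That is the right call here, and it matches the role the theorem plays in this paper.
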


Second tool is a beautiful technique by Thomassen~\cite{Thomassen94} who showed that every plane graph is properly $5$-choosable.
We use the following stronger statement which can be obtained using the same proof.
The modified proof is presented in Appendix~\ref{app:five_color}.
\begin{theorem}[Thomassen~\cite{Thomassen94}]\label{thm:five_color}\leavevmode\newline
  Every plane graph is properly $\brac{5m:m}$-choosable.
\end{theorem}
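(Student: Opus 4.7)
The plan is to generalize Thomassen's original proof by replacing each single color with an $m$-set of colors throughout, which costs only a linear factor of $m$ at every list-size threshold. The stronger inductive statement I would prove is the following: \emph{Let $G$ be a plane near-triangulation with outer cycle $C = v_1 v_2 \cdots v_k$, and let $L$ be a list assignment satisfying $\norm{L(v_i)} \geq 3m$ for $3 \leq i \leq k$, $\norm{L(v)} \geq 5m$ for every internal vertex $v$, and $\norm{L(v_1)} = \norm{L(v_2)} = m$ with $L(v_1) \cap L(v_2) = \emptyset$; then there exists a proper $m$-list assignment $M \subseteq L$ of $G$ with $M(v_1) = L(v_1)$ and $M(v_2) = L(v_2)$.} Theorem~\ref{thm:five_color} follows by adding edges inside every non-triangular face of $G$ to obtain a near-triangulation $G'$ (a proper $m$-list assignment of $G'$ is also one of $G$), fixing an arbitrary edge $v_1 v_2$ of the outer face, selecting an $m$-subset $M(v_1) \subseteq L(v_1)$ arbitrarily and then an $m$-subset $M(v_2) \subseteq L(v_2) \setminus M(v_1)$ (possible since $\norm{L(v_2)} - m \geq 4m$), restricting $L(v_1), L(v_2)$ to those two disjoint $m$-sets, and invoking the strengthened statement.

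The induction is on $\norm{V(G)}$. The base case $\norm{V(G)} = 3$ is immediate: take any $m$-subset of $L(v_3) \setminus \brac{M(v_1) \cup M(v_2)}$, which has size at least $3m - 2m = m$. If $C$ has a chord $v_p v_q$, the graph splits into two near-triangulations sharing that chord; induction on the side containing the edge $v_1 v_2$ produces $M(v_p)$ and $M(v_q)$ as disjoint $m$-sets, and a second application of induction on the opposite side, using $v_p, v_q$ as the new precolored edge, completes the coloring. If $C$ has no chord, let $u_1, \ldots, u_t$ be the internal neighbors of $v_k$, so that $v_1, v_{k-1}, u_1, \ldots, u_t$ are all neighbors of $v_k$. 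I reserve a $2m$-subset $B \subseteq L(v_k) \setminus M(v_1)$, available since $\norm{L(v_k)} \geq 3m$, redefine $L(u_i) := L(u_i) \setminus B$ for each $i$ (the new size is at least $5m - 2m = 3m$, matching the outer-cycle threshold in $G - v_k$), apply induction to $G - v_k$, and finally assign $M(v_k)$ to be an arbitrary $m$-subset of $B \setminus M(v_{k-1})$, whose size is at least $2m - m = m$. By construction this $M(v_k)$ is disjoint from $M(v_1)$, from every $M(u_i) \subseteq L(u_i) \setminus B$, and from $M(v_{k-1})$, so the proper $m$-list assignment property at $v_k$ is verified.

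The argument is structurally identical to Thomassen's proof for $m = 1$; the only thing to verify is that the three quantities that must stay at least $m$ --- namely $\norm{L(v_2) \setminus M(v_1)}$ in the initial reduction, $\norm{L(u_i) \setminus B}$ after the reservation, and $\norm{B \setminus M(v_{k-1})}$ at the final assignment of $v_k$ --- all scale linearly when every list threshold is scaled by $m$. Thomassen's original slacks of $4$, $3$, and $2$ respectively scale to $4m$, $3m$, and $m$, so no new combinatorial input is required. The only minor subtlety is bookkeeping: the precolored sets $M(v_1), M(v_2)$ must be treated as \emph{specific} disjoint $m$-subsets fixed at the outset and tracked through every recursive call, so that disjointness of $M$-sets along each edge is maintained at every step.
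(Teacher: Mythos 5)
Your proof is correct and follows essentially the same route as the paper: the appendix proves exactly this strengthened near-triangulation statement (precolored edge gets two disjoint $m$-sets, outer cycle gets $3m$-lists, interior gets $5m$-lists), with the same chord split and the same ``remove $v_p$, reserve a $2m$-set $X \subseteq L(v_p) \setminus M(v_1)$, delete $X$ from the internal neighbors' lists, recurse, then pick an $m$-subset of $X \setminus M(v_{p-1})$'' inductive step. The only addition you make is spelling out the routine reduction from general plane graphs to near-triangulations, which the paper leaves implicit.
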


Grytczuk \etal{}~\cite{GrytczukKM13} showed that every path is non-repetitively $4$-choosable.
In our proof, we use an even stronger result by G{\k{a}}gol \etal{}~\cite[Lemma~6]{GagolJKM16}.
The proof of the following theorem uses the entropy compression method and works for graphs more general than simple paths.
\begin{theorem}[G{\k{a}}gol, Joret, Kozik, Micek~\cite{GagolJKM16}]\label{thm:path}\leavevmode\newline
  Every simple path is non-repetitively $\brac{32m^3+1:m}$-choosable.
\end{theorem}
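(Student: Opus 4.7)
The plan is to prove Theorem~\ref{thm:path} by the entropy compression method, adapted to the list-size-$m$ setting. Let $P$ be the path on vertices $v_1, v_2, \ldots, v_n$, set $K = 32m^3 + 1$, and let $L$ be a $K$-list assignment of $P$. I would analyze the following randomized procedure, which attempts to build a non-repetitive $m$-sublist assignment $M \subseteq L$. A pointer $i$ starts at $1$ and $M$ is maintained on the prefix $v_1,\ldots,v_{i-1}$. In each round, sample $M(v_i)$ uniformly from the $\binom{|L(v_i)|}{m}$ possible $m$-subsets of $L(v_i)$. Call the window $[i-2\ell+1, i]$ (with $2\ell \leq i$) \emph{dangerous} if $M(v_{i-2\ell+1+j}) \cap M(v_{i-\ell+1+j}) \neq \emptyset$ for every $j \in \{0,\ldots,\ell-1\}$; this is precisely the condition under which some completion to an $M$-coloring would realize a repetition on this window. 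If no window ending at $v_i$ is dangerous, increment $i$. Otherwise let $\ell^*$ be the smallest witnessing length, erase the assignments $M(v_{i-\ell^*+1}),\ldots,M(v_i)$, and reset $i \leftarrow i-\ell^*+1$. The procedure halts when $i=n+1$, at which point the produced $M$ is the desired non-repetitive $m$-sublist assignment of $L$.

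The core of the argument is to show that with probability $1$ the procedure halts in finitely many rounds, hence a successful outcome exists. Following the entropy compression template, I would inject each length-$T$ run into a pair consisting of (i) the final state $(i, M)$ and (ii) a compact record of the backtrack events. The savings come from re-encoding the erased subsets. At a backtrack of length $\ell^*$, the dangerous-window condition supplies, for each $j \in \{0,\ldots,\ell^*-1\}$, a pivot color $c_j \in M(v_{i-2\ell^*+1+j}) \cap M(v_{i-\ell^*+1+j})$. Since the first half of the window has been preserved, $c_j$ is encoded by $m$ options; and the erased set $M(v_{i-\ell^*+1+j})$, given $c_j$, is encoded by at most $\binom{K-1}{m-1}$ options. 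The total per-vertex cost $m \binom{K-1}{m-1}$ replaces the a priori random cost $\binom{K}{m} = \frac{K}{m}\binom{K-1}{m-1}$, so every erased vertex saves a factor of $K/m^2 > 32m$.

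The main obstacle is balancing these per-vertex savings against the combinatorial count of admissible backtrack schedules (sequences of forward/backward pointer moves that eventually reach $n+1$) and against the cost of recording each backtrack length $\ell^*$. I would model the schedule as a Lukasiewicz-style walk on $\mathbb{Z}$, bound the number of such walks of given length with prescribed jump-length profile by a suitable generating-function estimate, and then verify that the savings factor $K/m^2 > 32m$ makes the expected run length polynomial in $n$. This generating-function bookkeeping, essentially Lemma~6 of G\k{a}gol--Joret--Kozik--Micek~\cite{GagolJKM16}, is what pins down both the constant $32$ and the cubic power $m^3$ in the list size $K = 32m^3+1$.
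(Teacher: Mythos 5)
The paper does not prove Theorem~\ref{thm:path}; it imports the statement as Lemma~6 of G\k{a}gol, Joret, Kozik, and Micek~\cite{GagolJKM16}, remarking only that the proof there uses entropy compression, so there is no in-paper argument to compare against. Your sketch is a faithful reconstruction of the outline of that method, and its local ingredients are correct: the dangerous-window condition (pairwise nonempty intersections $M(v_{i-2\ell+1+j}) \cap M(v_{i-\ell+1+j}) \neq \emptyset$) is exactly the condition under which some $M$-coloring realizes a repetition on that window; the re-encoding trick at a backtrack, where each erased $m$-set is described by a pivot color ($m$ options, since the matching first-half set is retained) together with an $(m-1)$-subset of the remaining list ($\binom{K-1}{m-1}$ options), is the right move; and the resulting per-vertex savings ratio $\binom{K}{m}\big/\bigl(m\binom{K-1}{m-1}\bigr) = K/m^2 > 32m$ is indeed what produces the cubic dependence in $K = 32m^3+1$.

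The genuine gap is the step you explicitly defer at the end. You never carry out the comparison between this per-vertex savings factor and the growth rate of the set of admissible pointer trajectories together with the cost of recording each backtrack length $\ell^*$; that comparison is precisely where the constant $32$ is pinned down and where the argument either closes or fails. As written, your sketch shows the bookkeeping has the right shape, but it would be equally consistent with \emph{any} constant in place of $32$, and so it does not establish that $32m^3+1$ lists suffice. To turn this into a proof you would need either the explicit generating-function bound from~\cite{GagolJKM16} or a Rosenfeld-style direct counting argument in its place.
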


The last ingredient is the study of bipolar orientations of planar graphs.
This notion was first used by Lempel \etal{}~\cite{LempelEC67} to develop planarity testing algorithm.
Let $G$ be a plane graph with a planar straight-line drawing such that no two vertices have the same $y$-coordinate.
Let $\dir{G}$ denote an orientation of $G$ that arises from directing each edge upward, \ie{} towards a vertex with bigger $y$-coordinate.
A \emph{source} in a directed graph is a vertex with no incoming edges.
Similarly, a \emph{sink} is a vertex with no outgoing edges.
We say that the drawing of $G$ is \emph{bipolar} if $\dir{G}$ is acyclic, has a single source $s$, and a single sink $t$.
In order to easily distinguish vertices $s$ and $t$, we call such a drawing to be \emph{$(s,t)$-bipolar}.

\begin{theorem}[Lempel, Even, Cederbaum~\cite{LempelEC67}]\label{thm:bipolar}\leavevmode\newline
  For every 2-connected plane graph $G$, and two vertices $s$, and $t$ on the external face of $G$, there is an equivalent $(s,t)$-bipolar drawing of $G$.
\end{theorem}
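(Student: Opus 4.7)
The plan is to reduce the theorem to the classical combinatorial notion of an \emph{$st$-numbering}: a bijection $\sigma \colon V(G) \to \set{1,\ldots,n}$ with $\sigma(s)=1$ and $\sigma(t)=n$ such that every other vertex has both a neighbor with strictly smaller number and a neighbor with strictly larger number. Given such a $\sigma$, I would place each vertex $v$ at $y$-coordinate $\sigma(v)$ and realize the edges as straight segments so as to preserve the planar embedding. Then $\dir{G}$ is automatically acyclic because the $y$-coordinates give a topological order, and the defining condition of an $st$-numbering forces $s$ to be the unique $\src$ and $t$ the unique $\snk$ of $\dir{G}$.

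Before that, I would do a small preparation. Since $s$ and $t$ both lie on the external face, one may add an auxiliary edge $\set{s,t}$ drawn along the boundary of the external face without destroying planarity or 2-connectivity and without altering any pre-existing face. Any $(s,t)$-bipolar drawing of the augmented graph remains $(s,t)$-bipolar after deletion of this auxiliary edge, so we may assume that $\set{s,t}$ is itself an edge of $G$.

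Next I would establish existence of an $st$-numbering by induction on $\norm{V(G)}$ via an open ear decomposition. A 2-connected graph containing the edge $\set{s,t}$ decomposes as that edge together with a sequence of ears $P_1,\ldots,P_k$, each a path whose endpoints are old vertices and whose internal vertices are new. Starting from $\sigma(s)=1$, $\sigma(t)=2$, I would insert the internal vertices of each successive ear $P_i$ consecutively into the numbering between the numbers already assigned to its two endpoints, shifting larger numbers up as needed. The endpoints of $P_i$ witness the smaller-neighbor and larger-neighbor conditions for every freshly numbered internal vertex, while for old vertices these conditions are preserved because insertion does not change which neighbor has a larger or smaller label.

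The main obstacle is the last step: passing from the combinatorial $st$-numbering $\sigma$ to an actual straight-line drawing equivalent to the given embedding in which $y(v)=\sigma(v)$. I would do this by a de Fraysseix-Pach-Pollack style incremental construction, processing vertices in increasing $\sigma$-order: at stage $i$, vertex $v_i$ is placed at height $i$ at an $x$-coordinate chosen so that the segments joining it to its already-drawn neighbors do not cross previously drawn edges and so that their left-to-right order at $v_i$ matches the rotation prescribed by the planar embedding of $G$. The key fact making this possible is that, at each stage, the already-drawn neighbors of $v_i$ appear as a contiguous interval on the upper boundary of the currently drawn region; this follows from the $st$-numbering property together with the embedding, and it is precisely here that the straight-line requirement in the definition of a bipolar drawing really bites.
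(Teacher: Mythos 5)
The paper does not actually prove Theorem~\ref{thm:bipolar}: it is cited from Lempel--Even--Cederbaum without proof, and what the paper really uses is only the combinatorial consequence (via Tamassia--Tollis) that each face of a bipolar orientation is bounded by two directed paths. So there is no paper-internal argument to compare against; I can only assess your proposal on its own terms.

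Your reduction to $st$-numberings is exactly the classical Lempel--Even--Cederbaum route, and the combinatorial half is essentially correct. Two small inaccuracies: adding the chord $\set{s,t}$ in the outer face does split that face, so the claim that no pre-existing face is altered is false as stated (though harmless, since after deleting the auxiliary edge the embedding of $G$ is restored and the orientation still has unique source $s$ and sink $t$). Also, an open ear decomposition of a $2$-connected graph usually starts from a cycle rather than a single edge, so $P_1$ must be a path from $s$ to $t$; once that is said, your ``insert internal vertices between the endpoint labels and shift'' argument does produce an $st$-numbering, since relative order among already-numbered vertices is preserved by the shift.

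The genuine gap is precisely the step you flag yourself: passing from an $st$-numbering to a straight-line drawing with the prescribed rotation system. You assert that the already-placed neighbors of $v_i$ form a contiguous interval on the upper boundary of the drawn region, but you give no argument; this is the heart of the matter and it relies on the planar $st$-graph structure (bimodality of the rotation at each vertex, every face bounded by two directed paths), which you have not established. Moreover, even granting the contiguity, placing $v_i$ at height exactly $\sigma(v_i)$ with straight segments to all drawn neighbors need not avoid crossings: the upper boundary may be non-convex, so one typically needs the de~Fraysseix--Pach--Pollack horizontal shifting and a slope invariant, neither of which appears in your sketch. As it stands, the proposal identifies the right ingredients but proves only the $st$-numbering existence, while the straight-line realization -- which is a later theorem of Di~Battista and Tamassia rather than of Lempel--Even--Cederbaum -- is left as an unproven claim. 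If you only need what the paper needs, you could instead stop at the $st$-numbering, observe that the induced orientation is acyclic with unique source $s$ and sink $t$, and invoke the Tamassia--Tollis face-structure lemma directly, sidestepping the straight-line drawing entirely.
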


For a face $F$ of an $(s,t)$-bipolar drawing of $G$, let $s(F)$ -- \emph{source} of $F$, and $t(F)$ -- \emph{sink} of $F$ be the vertex with respectively the minimal, and the maximal $y$-coordinate among vertices of $F$.
Observe, that the source of the external face of $G$ is $s$, and the sink of the external face of $G$ is $t$.
Tamassia and Tollis~\cite{TamassiaT86} showed that the boundary of any face $F$ of a bipolar orientation consists of two directed paths from $s(F)$ to $t(F)$.
As a result, we get that any vertex $v$ is a source or sink of all but at most two faces.
See Figure~\ref{fig:bipolar}.

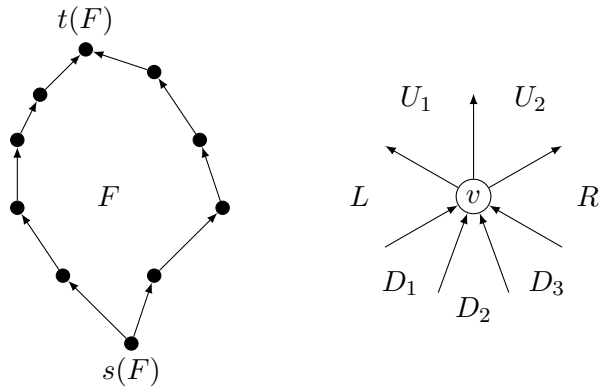
\begin{figure}[h]
  \begin{center}
\begin{tikzpicture}[>=latex, scale=1.5]
\begin{small}

\begin{scope}
\tikzstyle{every node}=[circle,minimum size=5pt,inner sep=0pt,draw,fill,label distance=1pt]

\node[draw=none,fill=none] (sl) at (1, -0.25) {$s(F)$};
\node[draw=none,fill=none] (sl) at (0.6, 2.85) {$t(F)$};
\node (s) at (1, 0) {};
\node (t) at (0.6, 2.6) {};

\node[draw=none,fill=none] (l) at (0.8, 1.3) {$F$};

\node (l1) at (0.4, 0.6) {};
\node (l2) at (0, 1.2) {};
\node (l3) at (0, 1.8) {};
\node (l4) at (0.2, 2.2) {};

\node (r1) at (1.2, 0.6) {};
\node (r2) at (1.8, 1.2) {};
\node (r3) at (1.6, 1.8) {};
\node (r4) at (1.2, 2.4) {};

\path (s) edge[->] (l1);
\path (l1) edge[->] (l2);
\path (l2) edge[->] (l3);
\path (l3) edge[->] (l4);
\path (l4) edge[->] (t);
\path (s) edge[->] (r1);
\path (r1) edge[->] (r2);
\path (r2) edge[->] (r3);
\path (r3) edge[->] (r4);
\path (r4) edge[->] (t);

\end{scope}

\begin{scope}[shift={(3,0.3)}]

\node[circle,minimum size=5pt,inner sep=2pt,draw,label distance=1pt] (x) at (1,1) {$v$};
\node (a1) at (0.133975,1.5) {};
\node (a2) at (0.357212,1.76604) {};
\node (a25) at (0.5,1.866030) {$U_1$};
\node (a3) at (0.65798,1.93969) {};
\node (a4) at (1,2) {};
\node (a5) at (1.34202,1.93969) {};
\node (a55) at (1.5, 1.86603) {$U_2$};
\node (a6) at (1.64279,1.76604) {};
\node (a7) at (1.86603,1.5) {};

\node (b1) at (1.86603,0.5) {};
\node (b2) at (1.64279,0.233956) {$D_3$};
\node (b3) at (1.34202,0.0603074) {};
\node (b4) at (1,0) {$D_2$};
\node (b5) at (0.65798,0.0603074) {};
\node (b6) at (0.357212,0.233956) {$D_1$};
\node (b7) at (0.133975,0.5) {};

\path (x) edge[->] (a1);
\path (x) edge[->] (a4);
\path (x) edge[->] (a7);
\path (x) edge[<-] (b1);
\path (x) edge[<-] (b3);
\path (x) edge[<-] (b5);
\path (x) edge[<-] (b7);

  \node (l) at (0,1) {$L$};
  \node (r) at (2,1) {$R$};

\end{scope}
\end{small}
\end{tikzpicture}
\end{center}
\caption{
  Bipolar drawing.
  On the left, boundary of face $F$ consists of two directed paths from $s(F)$ to $t(F)$.
  On the right, vertex $v$ is a source/sink of all but two faces, i.e.\@ $v = s(U_1) = s(U_2) = t(D_1) = t(D_2) = t(D_3)$.
}
\label{fig:bipolar}
\end{figure}

\section{Result}

The main idea behind the coloring algorithm is the following.
We say that a vertex $v$ on a face $F$ is either \emph{regular} or \emph{special} for $F$.
We divide occurrences of the vertices on the faces so that:
\begin{itemize}
  \item there are two special vertices for any face,
  \item each vertex is regular for at most two faces.
\end{itemize}
Lemma~\ref{lem:drawing} gives such a division.
The construction is based on bipolar orientations of $2$-connected plane graphs.

Our coloring algorithm first filters list assignment so that any two vertices in distance at most two on any face have disjoint lists of colors.
This is obtained by applying Theorem~\ref{thm:five_color} a few times and is described in Lemma~\ref{lem:square}.
Then, we introduce a technique that chooses colors for a single face in a slightly augmented setting.
Intuitively, each face $F$ "controls" the colors of the regular vertices for $F$, but have to "accept" any colors of the special vertices for $F$.
This way, large faces "control" the colors of most of its vertices.
On the other hand, list of colors for any vertex is "controlled" by at most two faces.
Lemma~\ref{lem:face} gives a method to filter lists of the regular vertices for a single face $F$ so that no matter how the special vertices for $F$ are colored there is no repetition on any facial path of $F$.
The proof covers the face with paths and uses Theorem~\ref{thm:path}.
Last observation that completes the proof is that we can introduce an auxiliary planar graph on the faces of $G$, color them with four colors, and apply Lemma~\ref{lem:face} simultaneously for all faces of the same color.

\begin{lemma}[Drawing Lemma]\label{lem:drawing}
  Let $G$ be a plane graph, and $s$, $t$ be two vertices on the external face of $G$.
  There is a way to name each occurrence of a vertex on a face either normal or special and satisfy the following conditions:
  \begin{itemize}
    \item there are two special vertices for any face,
    \item each vertex is regular for at most two faces,
    \item vertices $s$ and $t$ are not regular for any face.
  \end{itemize}
\end{lemma}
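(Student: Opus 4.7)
My plan is to apply Theorem~\ref{thm:bipolar} after first reducing to the $2$-connected case, and then use the Tamassia--Tollis description of bipolar orientations to read off the labeling.

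First, I would handle the case when $G$ is $2$-connected. Theorem~\ref{thm:bipolar} provides an equivalent $(s,t)$-bipolar drawing of $G$. For each face $F$, I would mark the (unique) occurrence of $s(F)$ and the (unique) occurrence of $t(F)$ on the boundary of $F$ as special, and every other vertex occurrence on the boundary as regular. All three conditions then follow quickly. The boundary of $F$ is a simple cycle with distinct $y$-coordinates, so $s(F) \neq t(F)$, giving exactly two special occurrences per face. By the Tamassia--Tollis theorem, every vertex is the source or sink of all but at most two faces incident to it, so each vertex is regular for at most two faces. Since $s$ is the unique global source, all its incident edges point outward, making $s$ the lowest vertex on every face containing it and thus $s = s(F)$ there; analogously $t = t(F)$, so neither $s$ nor $t$ is ever regular.

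For a general connected plane graph, I would reduce to the $2$-connected case by augmenting $G$ to a $2$-connected plane graph $G^+$ with the same vertex set, adding edges inside faces to resolve cut vertices while keeping $s$ and $t$ on the external face. After applying the $2$-connected construction to $G^+$, I would transfer the labeling back to $G$ by declaring, for each face $F$ of $G$, the lowest and highest vertex (by $y$-coordinate) on the boundary of $F$ as its two special occurrences and all other occurrences as regular.

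The main obstacle is verifying the second condition in the general case. A vertex $v$ is the source or sink of all but at most two faces of $G^+$, but several of these $G^+$-faces can lie inside the same $G$-face, so the bound does not immediately transfer to $G$. I expect to handle this by choosing the augmentation carefully---for instance, informed by the block-cut tree of $G$, and picking $s_B, t_B$ in each block so that each cut vertex is forced to be $s_B$ or $t_B$ in every block that is not the one containing $s$ and $t$---or by a direct combinatorial argument on the cyclic arrangement of in/out-edges at each vertex. This is the subtle step that keeps the global regular count bounded by two.
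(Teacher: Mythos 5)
Your handling of the $2$-connected case is correct and is exactly what the paper does: apply Theorem~\ref{thm:bipolar}, mark $s(F)$ and $t(F)$ as the two special occurrences of each face $F$, and invoke the Tamassia--Tollis structure to bound regular occurrences by two per vertex and to observe that $s$ and $t$ are never regular.

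The reduction to the $2$-connected case, however, is not a proof; it is a plan with an acknowledged hole, and the hole is real. Augmenting $G$ to a $2$-connected $G^{+}$ and then re-labeling the faces of $G$ by marking only the globally lowest and highest occurrence on each $G$-face boundary does not obviously satisfy the second condition. In $G^{+}$ every face has a unique local minimum and maximum (Tamassia--Tollis), so ``local extremum'' and ``global extremum'' coincide; in $G$ this fails. A face of $G$ whose boundary walk visits a cut vertex several times can have several local minima and maxima, and only one of each is global. Consequently a vertex $v$ whose in-edges and out-edges at $v$ still split the faces of $G$ into (at most two transition faces) + (faces where $v$ is a local extremum) can end up regular not just for the two transition faces but also for faces where $v$ is a local extremum that is not global. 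That can exceed two, and your proposed fixes (choosing poles per block, or a direct argument on the cyclic in/out pattern) are not carried out.

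The paper instead proceeds by induction on the number of vertices, splitting at a cut vertex $v$ into $G'$ and $G''$, and crucially \emph{choosing $v$ as one of the two designated poles} for the recursive call on the side not containing $s$ and $t$; this forces $v$ to be special in every face of that side, so the merged face $F$ of $G$ gets its special vertices either inherited from $F'$ or reset to $\set{s,t}$, and the bound of two regular faces per vertex is preserved by construction. This is precisely the mechanism your sketch gestures at (``picking $s_B, t_B$ in each block so that each cut vertex is forced to be $s_B$ or $t_B$'') but does not establish. To make your route rigorous you would need to replace the $y$-coordinate re-labeling on $G$ with an explicit rule tied to the block decomposition, at which point you are essentially reproducing the paper's induction.
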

\begin{proof}
  By induction on the number of vertices of $G$.
  If $G$ has two vertices $s$ and $t$, then both vertices are special for the only face of $G$.

  If $G$ is 2-connected, we apply Theorem~\ref{thm:bipolar} and get an $(s,t)$-bipolar drawing of $G$.
  For each face $F$, we choose the special vertices for $F$ to be $s(F)$ and $t(F)$.
  Properties of bipolar drawings, see Figure~\ref{fig:bipolar}, guarantee that each vertex $v$ is special for all but at most two faces.
  Vertices $s$ and $t$ are special for all the faces.

  If $G$ is not 2-connected, let $v$ be a cut-point of $G$ and $G_1,\ldots,G_k$ be the components of $G\grminus\set{v}$.
  Without loss of generality, assume that $s$ is in $G_1$, and that $t$ is either in $G_1$, or in $G_2$.
  Let $G'$ denote the graph $G \grminus G_k$, and $G''$ denote the graph $G \grminus G_1,\ldots,G_{k-1}$.
  Let $F$ be the only face of $G$ such that the boundary of $F$ has edges both in $G'$ and $G''$.
  Let $F'$ be the face of $G'$ such that $G''$ is drawn inside face $F'$.
  Let $F''$ be the external face of $G''$.
  Observe, that vertex $v$ is in $F \cap F' \cap F''$ and that the boundary of $F$ is a boundary of $F'$ with "inserted" boundary of $F''$.
  See Figure~\ref{fig:drawing}.

  First, assume that $t$ is in $G'$.
  Apply induction for $G'$, $s$, and $t$.
  Choose any vertex $w$ other than $v$ in $F''$.
  Apply induction for $G''$, $v$, and $w$.
  For each face of $G$ different than $F$, special vertices are determined by induction.
  We set the special vertices for $F$ to be the special vertices for $F'$ in $G'$.
  Observe that vertex $w$ is special for all faces of $G''$, and thus $w$ is normal only for one face $F$ of $G$.
  Every vertex other than $w$ that is normal for $F$ is either normal for $F'$, or normal for $F''$.

  Now, assume that $t$ is in $G''$.
  This means that $k=2$, $t$ is in $G_2$ and $F$ is the external face of $G$.
  Apply induction for $G'$, $s$, and $v$.
  Apply induction for $G''$, $v$, and $t$.
  For each face of $G$ different than $F$, special vertices are determined by two induction calls.
  We set the special vertices for $F$ to be $s$ and $t$.
  Vertex $v$ is special for all faces of $G'$, and for all faces of $G''$, and thus $v$ is normal only for one face $F$ of $G$.
  Every vertex other than $v$ that is normal for $F$ is either normal for $F'$, or normal for $F''$.
\end{proof}

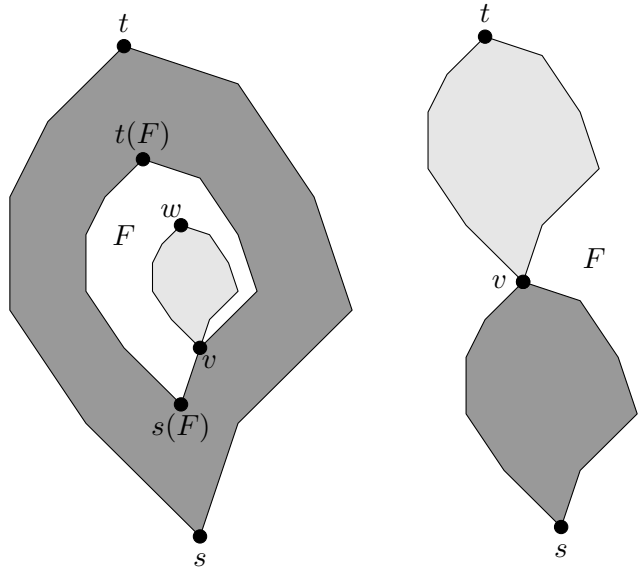
\begin{figure}[h]
  \begin{center}
\begin{tikzpicture}[>=latex, scale=1.25]
\begin{small}

\begin{scope}
\tikzstyle{every node}=[circle,minimum size=5pt,inner sep=0pt,draw,fill,label distance=1pt]
\def\scale{2}
\node[draw=none,fill=none] (a) at ($(1.2,-1.4)-{\scale}*(1,0)$) {};
\coordinate (s) at ($(a)+{\scale}*(1, 0)$) {};
\coordinate (t) at ($(a)+{\scale}*(0.6, 2.6)$) {};
\coordinate (l1) at ($(a)+{\scale}*(0.4, 0.6)$) {};
\coordinate (l2) at ($(a)+{\scale}*(0, 1.2)$) {};
\coordinate (l3) at ($(a)+{\scale}*(0, 1.8)$) {};
\coordinate (l4) at ($(a)+{\scale}*(0.2, 2.2)$) {};
\coordinate (r1) at ($(a)+{\scale}*(1.2, 0.6)$) {};
\coordinate (r2) at ($(a)+{\scale}*(1.8, 1.2)$) {};
\coordinate (r3) at ($(a)+{\scale}*(1.6, 1.8)$) {};
\coordinate (r4) at ($(a)+{\scale}*(1.2, 2.4)$) {};
\filldraw[fill=gray!80,draw=black,line join=round] (s) -- (l1) -- (l2) -- (l3) -- (l4) -- (t) -- (r4) -- (r3) -- (r2) -- (r1) -- cycle;
\node (rs) at (s) {};
\node (rt) at (t) {};
\node[draw=none,fill=none] (sl) at ($(s)+(0, -0.25)$) {$s$};
\node[draw=none,fill=none] (tl) at ($(t)+(0, 0.25)$) {$t$};
\end{scope}

\begin{scope}
\tikzstyle{every node}=[circle,minimum size=5pt,inner sep=0pt,draw,fill,label distance=1pt]
\coordinate (s) at (1, 0) {};
\coordinate (t) at (0.6, 2.6) {};
\coordinate (l1) at (0.4, 0.6) {};
\coordinate (l2) at (0, 1.2) {};
\coordinate (l3) at (0, 1.8) {};
\coordinate (l4) at (0.2, 2.2) {};
\coordinate (r1) at (1.2, 0.6) {};
\coordinate (r2) at (1.8, 1.2) {};
\coordinate (r3) at (1.6, 1.8) {};
\coordinate (r4) at (1.2, 2.4) {};
\filldraw[fill=white,draw=black,line join=round] (s) -- (l1) -- (l2) -- (l3) -- (l4) -- (t) -- (r4) -- (r3) -- (r2) -- (r1) -- cycle;
\node (rs) at (s) {};
\node (rt) at (t) {};
\node[draw=none,fill=none] (sl) at ($(s)+(0, -0.25)$) {$s(F)$};
\node[draw=none,fill=none] (tl) at ($(t)+(0, 0.25)$) {$t(F)$};
\node[draw=none,fill=none] (l) at (0.4, 1.8) {$F$};
\end{scope}

\begin{scope}
\tikzstyle{every node}=[circle,minimum size=5pt,inner sep=0pt,draw,fill,label distance=1pt]
\def\scale{0.5}
\coordinate[draw=none,fill=none] (a) at ($(1.2,0.6)-{\scale}*(1,0)$) {};
\coordinate (s) at ($(a)+{\scale}*(1, 0)$) {};
\coordinate (t) at ($(a)+{\scale}*(0.6, 2.6)$) {};
\coordinate (l1) at ($(a)+{\scale}*(0.4, 0.6)$) {};
\coordinate (l2) at ($(a)+{\scale}*(0, 1.2)$) {};
\coordinate (l3) at ($(a)+{\scale}*(0, 1.8)$) {};
\coordinate (l4) at ($(a)+{\scale}*(0.2, 2.2)$) {};
\coordinate (r1) at ($(a)+{\scale}*(1.2, 0.6)$) {};
\coordinate (r2) at ($(a)+{\scale}*(1.8, 1.2)$) {};
\coordinate (r3) at ($(a)+{\scale}*(1.6, 1.8)$) {};
\coordinate (r4) at ($(a)+{\scale}*(1.2, 2.4)$) {};
\filldraw[fill=gray!20,draw=black,line join=round] (s) -- (l1) -- (l2) -- (l3) -- (l4) -- (t) -- (r4) -- (r3) -- (r2) -- (r1) -- cycle;
\node (rs) at (s) {};
\node (rt) at (t) {};
\node[draw=none,fill=none] (sl) at ($(s)+(0.1, -0.15)$) {$v$};
\node[draw=none,fill=none] (tl) at ($(t)+(-0.1, 0.15)$) {$w$};
\end{scope}

\begin{scope}[shift={(5,0)}]

\begin{scope}
\tikzstyle{every node}=[circle,minimum size=5pt,inner sep=0pt,draw,fill,label distance=1pt]
\def\scale{1}
\node[draw=none,fill=none] (a) at ($(0,-1.3)-{\scale}*(1,0)$) {};
\coordinate (s) at ($(a)+{\scale}*(1, 0)$) {};
\coordinate (t) at ($(a)+{\scale}*(0.6, 2.6)$) {};
\coordinate (l1) at ($(a)+{\scale}*(0.4, 0.6)$) {};
\coordinate (l2) at ($(a)+{\scale}*(0, 1.2)$) {};
\coordinate (l3) at ($(a)+{\scale}*(0, 1.8)$) {};
\coordinate (l4) at ($(a)+{\scale}*(0.2, 2.2)$) {};
\coordinate (r1) at ($(a)+{\scale}*(1.2, 0.6)$) {};
\coordinate (r2) at ($(a)+{\scale}*(1.8, 1.2)$) {};
\coordinate (r3) at ($(a)+{\scale}*(1.6, 1.8)$) {};
\coordinate (r4) at ($(a)+{\scale}*(1.2, 2.4)$) {};
\filldraw[fill=gray!80,draw=black,line join=round] (s) -- (l1) -- (l2) -- (l3) -- (l4) -- (t) -- (r4) -- (r3) -- (r2) -- (r1) -- cycle;
\node (rs) at (s) {};
\node (rt) at (t) {};
\node[draw=none,fill=none] (sl) at ($(s)+(0, -0.25)$) {$s$};
\end{scope}
\begin{scope}
\tikzstyle{every node}=[circle,minimum size=5pt,inner sep=0pt,draw,fill,label distance=1pt]
\def\scale{1}
\node[draw=none,fill=none] (a) at ($(-0.4,1.3)-{\scale}*(1,0)$) {};
\coordinate (s) at ($(a)+{\scale}*(1, 0)$) {};
\coordinate (t) at ($(a)+{\scale}*(0.6, 2.6)$) {};
\coordinate (l1) at ($(a)+{\scale}*(0.4, 0.6)$) {};
\coordinate (l2) at ($(a)+{\scale}*(0, 1.2)$) {};
\coordinate (l3) at ($(a)+{\scale}*(0, 1.8)$) {};
\coordinate (l4) at ($(a)+{\scale}*(0.2, 2.2)$) {};
\coordinate (r1) at ($(a)+{\scale}*(1.2, 0.6)$) {};
\coordinate (r2) at ($(a)+{\scale}*(1.8, 1.2)$) {};
\coordinate (r3) at ($(a)+{\scale}*(1.6, 1.8)$) {};
\coordinate (r4) at ($(a)+{\scale}*(1.2, 2.4)$) {};
\filldraw[fill=gray!20,draw=black,line join=round] (s) -- (l1) -- (l2) -- (l3) -- (l4) -- (t) -- (r4) -- (r3) -- (r2) -- (r1) -- cycle;
\node (rs) at (s) {};
\node (rt) at (t) {};
\node[draw=none,fill=none] (sl) at ($(s)+(-0.25, 0)$) {$v$};
\node[draw=none,fill=none] (tl) at ($(t)+(0, 0.25)$) {$t$};
\node[draw=none,fill=none] (l) at ($(s)+(0.75, 0.25)$) {$F$};
\end{scope}

\end{scope}
\end{small}
\end{tikzpicture}
\end{center}
\caption{
  Division of a graph $G$ in Lemma~\ref{lem:drawing} into $G'$ (dark gray) and $G''$ (light gray).
  On the left, vertex $t$ is in $G'$.
  On the right, vertex $t$ is in $G''$.
}
\label{fig:drawing}
\end{figure}

Now, we prove several lemmas, that allow us to \emph{filter} list assignments of different structures.
Each of those lemmas tells us that some plane graphs are $\brac{f(m):m}$-choosable for a polynomial function $f$.
For example, set
$$f_1(m) = 5m\text{.}$$
Theorem~\ref{thm:five_color} gives that every planar graph is properly $\brac{f_1(m):m}$-choosable.

For a plane graph $G$, a \emph{facial-square} of $G$ is a graph on the same vertex set in which two vertices $u$ and $v$ are connected by an edge when $u$ and $v$ are connected in $G$ by a facial path of length at most two.
We say that a coloring of a plane graph $G$ is a \emph{facially-square-proper} if it is a proper coloring of the facial-square of $G$.

For the next lemma, set
$$f_2(m) = f_1(f_1(f_1(m))) = 125m\text{.}$$
\begin{lemma}[Facial-Square Filtering Lemma]\label{lem:square}
  Facial-square of a plane graph is properly $\brac{f_2(m) : m}$-choosable.
\end{lemma}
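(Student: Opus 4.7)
The strategy is to apply Theorem~\ref{thm:five_color} three times in succession. Since $f_2(m)=f_1(f_1(f_1(m)))=125m$, starting from the given $125m$-list assignment $L$ and applying the theorem to three planar graphs $T_3,T_2,T_1$ in that order yields sub-assignments $M_3\subseteq M_2\subseteq M_1\subseteq L$ of sizes at least $25m$, $5m$, $m$ respectively, where $M_i$ is proper along the edges of $T_{4-i}$. Provided that $T_1\cup T_2\cup T_3$ contains every edge of the facial-square $\tilde G$ of $G$, the final $m$-sublist assignment $M_3$ is proper on $\tilde G$, which is exactly what $\brac{f_2(m):m}$-choosability asks for.

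Each $T_c$ will be $G$ together with a non-crossing selection of length-$2$ facial chords. Every edge of $\tilde G\setminus E(G)$ has the form $\{v_i,v_{i+2}\}$ for some face $F$ with boundary cycle $v_1\cdots v_k$ (indices mod $k$). The plan is to $3$-color the length-$2$ chords of each face $F$ so that same-color chords admit a common non-crossing drawing inside $F$; then, denoting by $\tau_c(F)$ the chords of color $c$ and setting $T_c:=G\cup\bigcup_F\tau_c(F)$, the graph $T_c$ is planar: within a single face, same-color chords do not cross by the coloring, and chord drawings in different faces occupy interior-disjoint regions of the plane.

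The crucial combinatorial fact is that the ``would-cross inside $F$'' relation on the length-$2$ chords of a face of length $k$ is isomorphic to the cycle $C_k$: the chord $\{v_i,v_{i+2}\}$ crosses exactly the chords $\{v_{i-1},v_{i+1}\}$ and $\{v_{i+1},v_{i+3}\}$, while every other length-$2$ chord of $F$ either shares an endpoint with $\{v_i,v_{i+2}\}$ or has endpoint-arcs disjoint from those of $\{v_i,v_{i+2}\}$. Since $C_k$ is $3$-colorable for every $k$, the required color classes always exist and the graphs $T_1,T_2,T_3$ can be built as above. The remaining subtlety---more bookkeeping than a genuine obstacle---is handling faces whose boundary walk revisits a vertex (possible when $G$ has cut-vertices or bridges); this is treated either by a direct analysis of the walk or by an inductive reduction to $2$-connected blocks, analogous to the argument in the proof of Lemma~\ref{lem:drawing}.
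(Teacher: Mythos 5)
Your proof is correct and follows the same strategy as the paper: cover the edges of the facial-square by three planar subgraphs and apply Theorem~\ref{thm:five_color} once to each, noting that properness is preserved under passing to sub-assignments, so the final $m$-list assignment is proper on the union of the three graphs. Only the details of the three-way decomposition differ. The paper reserves one class (``red'') for all of $E(G)$, $2$-colors the length-$2$ facial chords of each face by the parity of the boundary index, and reassigns a single chord to the red class when the face has odd length. You instead include $G$ in every $T_c$ and $3$-color the chords directly, using the clean structural fact that the crossing relation on the length-$2$ chords inside a face of length $k$ is a cycle $C_k$ (degenerating to a single crossing pair, or nothing, for $k\le 4$), hence $3$-colorable. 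Your formulation is a bit more uniform and sidesteps the odd-length case split, at the cost of the harmless redundancy of carrying $G$ in all three subgraphs. Both arguments treat faces whose boundary walk revisits a vertex somewhat informally; you flag that subtlety explicitly, which is the right instinct.
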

\begin{proof}
  Let $G$ be a plane graph, and let $L$ be an $f_2(m)$-list assignment of $G$.

  We show that edges of facial-square of $G$ can be decomposed into three sets, say red, green, and blue so that any two edges of the same color are non-crossing.
  See Figure~\ref{fig:square} for an example of the following decomposition.
  First, color red all edges of $G$.
  Then, for each face $F$, we color the edges that correspond to pairs of vertices in distance two on $F$.
  Let $l$ denote the number of vertex occurrences on the boundary of $F$, and let $v_0,v_1,\ldots,v_{l-1}$ be the vertices of $F$ in the clockwise order.
  Color green every edge $\set{v_i, v_j}$, where $j = \brac{\brac{i+2} \mod l}$ and both $i$ and $j$ are odd numbers.
  Similarly, color green every edge $\set{v_i, v_j}$, where $j = \brac{\brac{i+2} \mod l}$ and both $i$ and $j$ are even numbers.
  If $l$ is odd, color green the edge $\set{v_1,v_{l-1}}$, and color red the edge $\set{v_0,v_{l-2}}$.

  We apply Theorem~\ref{thm:five_color} to the red graph and obtain a $f_1(f_1(m))$-list assignment of the facial-square of $G$ in which any two vertices connected by a red edge have disjoint lists of permissible colors.
  We repeat the same two more times, for green, and for blue edges.
  In the end we obtain a facially-square-proper $m$-list assignment of $G$.
\end{proof}

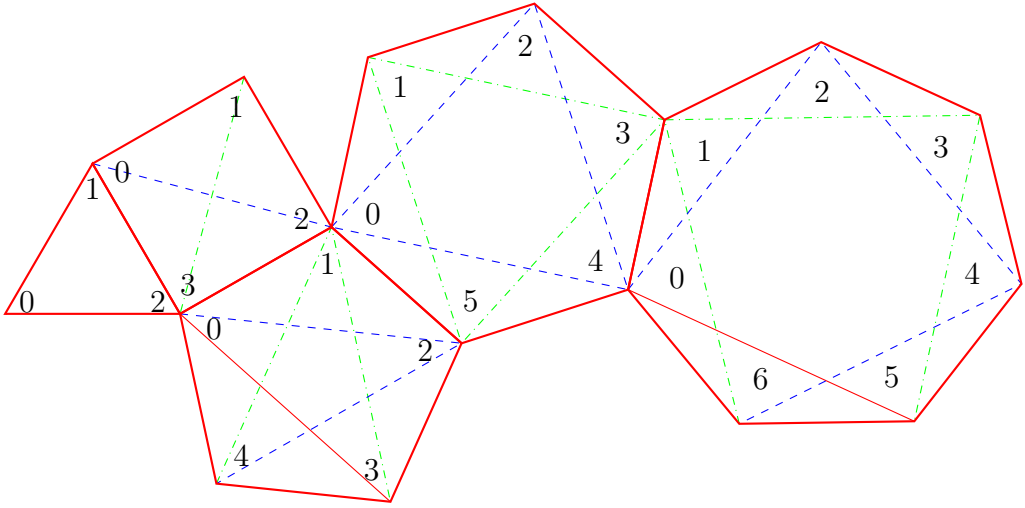
\begin{figure}[h]
  \begin{center}
\begin{tikzpicture}[scale=2.3]

  \def\rada{1/(2*sin(180/3))}
  \def\apoa{\rada*cos(180/3)}
  \def\radb{1/(2*sin(180/4))}
  \def\apob{\radb*cos(180/4)}
  \def\apoab{((\apoa+\apob)/(\apoa))}
  \def\radc{1/(2*sin(180/5))}
  \def\apoc{\radc*cos(180/5)}
  \def\apobc{((\apob+\apoc)/(\apob))}
  \def\radd{1/(2*sin(180/6))}
  \def\apod{\radd*cos(180/6)}
  \def\apocd{((\apoc+\apod)/(\apoc))}
  \def\rade{1/(2*sin(180/7))}
  \def\apoe{\rade*cos(180/7)}
  \def\apode{((\apod+\apoe)/(\apod))}

  \coordinate (ac) at (0,0) {};
  \coordinate (a0) at ($ (ac) + (210:{\rada}) $) {};
  \coordinate (a1) at ($ (ac)!1!{(-360/3)}:(a0) $) {};
  \coordinate (a2) at ($ (ac)!1!{(-360/3)}:(a1) $) {};
  \coordinate (ae) at ($ (a1)!0.5!(a2) $) {};

  \coordinate (bc) at ($ (ac)!{\apoab}!(ae) $) {};  
  \coordinate (b0) at ($ (a1) $) {};
  \coordinate (b1) at ($ (bc)!1!{(-360/4)}:(b0) $) {};
  \coordinate (b2) at ($ (bc)!1!{(-360/4)}:(b1) $) {};
  \coordinate (b3) at ($ (bc)!1!{(-360/4)}:(b2) $) {};
  \coordinate (be) at ($ (b2)!0.5!(b3) $) {};

  \coordinate (cc) at ($ (bc)!{\apobc}!(be) $) {};
  \coordinate (c0) at ($ (b3) $) {};
  \coordinate (c1) at ($ (cc)!1!{(-360/5)}:(c0) $) {};
  \coordinate (c2) at ($ (cc)!1!{(-360/5)}:(c1) $) {};
  \coordinate (c3) at ($ (cc)!1!{(-360/5)}:(c2) $) {};
  \coordinate (c4) at ($ (cc)!1!{(-360/5)}:(c3) $) {};
  \coordinate (ce) at ($ (c1)!0.5!(c2) $) {};

  \coordinate (dc) at ($ (cc)!{\apocd}!(ce) $) {};
  \coordinate (d0) at ($ (c1) $) {};
  \coordinate (d1) at ($ (dc)!1!{(-360/6)}:(d0) $) {};
  \coordinate (d2) at ($ (dc)!1!{(-360/6)}:(d1) $) {};
  \coordinate (d3) at ($ (dc)!1!{(-360/6)}:(d2) $) {};
  \coordinate (d4) at ($ (dc)!1!{(-360/6)}:(d3) $) {};
  \coordinate (d5) at ($ (dc)!1!{(-360/6)}:(d4) $) {};
  \coordinate (de) at ($ (d4)!0.5!(d3) $) {};

  \coordinate (ec) at ($ (dc)!{\apode}!(de) $) {};
  \coordinate (e0) at ($ (d4) $) {};
  \coordinate (e1) at ($ (ec)!1!{-360/7}:(e0) $) {};
  \coordinate (e2) at ($ (ec)!1!{-360/7}:(e1) $) {};
  \coordinate (e3) at ($ (ec)!1!{-360/7}:(e2) $) {};
  \coordinate (e4) at ($ (ec)!1!{-360/7}:(e3) $) {};
  \coordinate (e5) at ($ (ec)!1!{-360/7}:(e4) $) {};
  \coordinate (e6) at ($ (ec)!1!{-360/7}:(e5) $) {};

  \path[draw, green, dashdotted] (b1) -- (b3);
  \path[draw, blue, dashed] (b0) -- (b2);

  \path[draw, red] (c0) -- (c3);
  \path[draw, green, dashdotted] (c4) -- (c1) -- (c3);
  \path[draw, blue, dashed] (c0) -- (c2) -- (c4);

  \path[draw, green, dashdotted] (d1) -- (d3) -- (d5) -- (d1);
  \path[draw, blue, dashed] (d0) -- (d2) -- (d4) -- (d0);

  \path[draw, red] (e0) -- (e5);
  \path[draw, green, dashdotted] (e6) -- (e1) -- (e3) -- (e5);
  \path[draw, blue, dashed] (e0) -- (e2) -- (e4) -- (e6);

  \path[draw, red, thick] (a0) -- (a1) -- (a2) -- cycle;
  \node[label=center:$0$] at ($(a0)!.25!(ac)$) {};
  \node[label=center:$1$] at ($(a1)!.25!(ac)$) {};
  \node[label=center:$2$] at ($(a2)!.25!(ac)$) {};

  \path[draw, red, thick] (b0) -- (b1) -- (b2) -- (b3) -- cycle;
  \node[label=center:$0$] at ($(b0)!.25!(bc)$) {};
  \node[label=center:$1$] at ($(b1)!.25!(bc)$) {};
  \node[label=center:$2$] at ($(b2)!.25!(bc)$) {};
  \node[label=center:$3$] at ($(b3)!.25!(bc)$) {};

  \path[draw, red, thick] (c0) -- (c1) -- (c2) -- (c3) -- (c4) -- cycle;
  \node[label=center:$0$] at ($(c0)!.25!(cc)$) {};
  \node[label=center:$1$] at ($(c1)!.25!(cc)$) {};
  \node[label=center:$2$] at ($(c2)!.25!(cc)$) {};
  \node[label=center:$3$] at ($(c3)!.25!(cc)$) {};
  \node[label=center:$4$] at ($(c4)!.25!(cc)$) {};

  \path[draw, red, thick] (d0) -- (d1) -- (d2) -- (d3) -- (d4) -- (d5) -- cycle;
  \node[label=center:$0$] at ($(d0)!.25!(dc)$) {};
  \node[label=center:$1$] at ($(d1)!.25!(dc)$) {};
  \node[label=center:$2$] at ($(d2)!.25!(dc)$) {};
  \node[label=center:$3$] at ($(d3)!.25!(dc)$) {};
  \node[label=center:$4$] at ($(d4)!.25!(dc)$) {};
  \node[label=center:$5$] at ($(d5)!.25!(dc)$) {};

  \path[draw, red, thick] (e0) -- (e1) -- (e2) -- (e3) -- (e4) -- (e5) -- (e6) -- cycle;
  \node[label=center:$0$] at ($(e0)!.25!(ec)$) {};
  \node[label=center:$1$] at ($(e1)!.25!(ec)$) {};
  \node[label=center:$2$] at ($(e2)!.25!(ec)$) {};
  \node[label=center:$3$] at ($(e3)!.25!(ec)$) {};
  \node[label=center:$4$] at ($(e4)!.25!(ec)$) {};
  \node[label=center:$5$] at ($(e5)!.25!(ec)$) {};
  \node[label=center:$6$] at ($(e6)!.25!(ec)$) {};

\end{tikzpicture}
\end{center}
\caption{
  Coloring edges of a facial-square in Lemma~\ref{lem:square}.
}
\label{fig:square}
\end{figure}

For the next lemma, set
$$
\begin{array}{lcl}
  f_3(m) & = & 32m^3+1\text{,}\\
  f_4(m) & = & f_3(m)+m = 32m^3+m+1\text{,}\\
  f_5(m) & = & f_3(f_4(m)) + m + f_4(m)= \Oh{m^{9}}\text{.}\\
\end{array}
$$
Theorem~\ref{thm:path} gives that a path is non-repetitively $\brac{f_3(m):m}$-choosable.
\begin{lemma}[Path Filtering Lemma]\label{lem:path}
  Let $m \geq 1$.
  Let $P$ be a simple path $\brac{v_1,\ldots,v_n}$, and $v_s$ be a selected vertex in $P$.
  Let $L$ be a proper list assignment of $P$ such that:
  \begin{itemize}
    \item $\norm{L(v_i)} = f_4(m)$, for $i \leq s$,
    \item $\norm{L(v_i)} = f_5(m)$, for $i > s$.
  \end{itemize}
  There is a non-repetitive list assignment $M \subseteq L$ of $P$ such that:
  \begin{itemize}
    \item $\norm{M(v_i)} = m$, for $i < s$,
    \item $\norm{M(v_i)} = f_4(m)$, for $i \geq s$.
  \end{itemize}
\end{lemma}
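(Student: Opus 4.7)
The plan is to apply Theorem~\ref{thm:path} in two stages: Stage~1 filters the suffix $\brac{v_{s+1}, \ldots, v_n}$ from lists of size $f_5(m)$ down to lists of size $f_4(m)$, and Stage~2 filters the prefix $\brac{v_1, \ldots, v_{s-1}}$ from lists of size $f_4(m)$ down to lists of size $m$. The parameter values $f_4(m) = f_3(m) + m$ and $f_5(m) = f_3(f_4(m)) + m + f_4(m)$ are engineered so that each stage has just enough colors for Theorem~\ref{thm:path}, with a small amount of slack available to forbid colors that could produce repetitions straddling $v_s$.

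For Stage~1, I would form $L'(v_i) := L(v_i) \setminus L(v_s)$ for each $i > s$. Since $\norm{L(v_s)} = f_4(m)$, this leaves $\norm{L'(v_i)} \geq f_5(m) - f_4(m) = f_3(f_4(m)) + m \geq f_3(f_4(m))$, so Theorem~\ref{thm:path} applied to the subpath $\brac{v_{s+1}, \ldots, v_n}$ with list assignment $L'$ and target $f_4(m)$ produces a non-repetitive list assignment $M_1$ with the extra property $M_1(v_i) \cap L(v_s) = \emptyset$ for every $i > s$. For Stage~2, I apply Theorem~\ref{thm:path} to $\brac{v_1, \ldots, v_{s-1}}$ with the original lists $L$ (of size $f_4(m) \geq f_3(m)$) and target $m$, producing a non-repetitive list assignment $M_2$. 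The candidate is defined by gluing: $M := M_2$ on the prefix, $M(v_s) := L(v_s)$, and $M := M_1$ on the suffix, giving the required sizes $m$ for $i < s$ and $f_4(m)$ for $i \geq s$.

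The non-repetitiveness of $M$ would be verified by splitting on where a potential repeating block lies. Blocks entirely inside the prefix are handled by $M_2$, those entirely inside $\brac{v_{s+1}, \ldots, v_n}$ by $M_1$. Blocks crossing $v_s$ split further by whether $v_s$ lies in the first or second half. If $v_s$ is in the first half, its mirror sits at some $v_{s+k}$ in the suffix, and the forced equality $c_s = c_{s+k}$ is impossible because $M(v_s) = L(v_s)$ is disjoint from $M_1(v_{s+k}) \subseteq L'(v_{s+k})$ by the Stage~1 construction.

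The main obstacle is the symmetric case: crossing repetitions in which $v_s$ lies in the second half of the block, whose mirror $v_{s-k}$ then lies in the prefix. Since $M_2$ is produced by Theorem~\ref{thm:path} on the prefix alone, $M_2(v_{s-k}) \cap L(v_s)$ need not be empty. I expect the resolution to exploit the $+m$ slack in $f_4(m) = f_3(m) + m$: before invoking Theorem~\ref{thm:path} in Stage~2, one removes from each prefix list $L(v_i)$ an appropriately chosen set of at most $m$ colors, so that every such Case~B configuration is ruled out by disjointness at some mirror pair of the block (the remaining pairs, which straddle the prefix/suffix boundary, can also be handled by a refinement of Stage~1's disjointness argument). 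Choosing which $m$ colors to forbid at each prefix vertex so that the budget suffices uniformly across every block length is the delicate combinatorial step that the sizes $f_4$ and $f_5$ are ultimately tuned to accommodate.
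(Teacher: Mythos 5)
You correctly identify the main obstacle — a crossing repetition in which $v_s$ sits in the second half, so that its mirror $v_{s-k}$ lands in the prefix, where your Stage~2 lists are not controlled relative to $L(v_s)$ — and you correctly guess that the $+m$ slack in $f_4$ and $f_5$ exists to remove $m$ extra colors. But the proposal never pins down \emph{which} $m$ colors to remove, and you even suggest the choice might have to vary with the block length. That is the genuine gap: the delicate-sounding "uniform budget across every block length" step is where the argument should have converged to something concrete, and it doesn't.

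The paper's resolution is to make $v_{s-1}$, not $v_s$, the buffer, and to use a single fixed $m$-set. First set $M(v_s)=L(v_s)$ and pick any $m$ colors for $M(v_{s-1})\subseteq L(v_{s-1})$. Then delete $M(v_{s-1})$ from every other vertex before invoking Theorem~\ref{thm:path}: from $v_1,\ldots,v_{s-2}$ (leaving $f_4(m)-m=f_3(m)$ colors, enough to reach target $m$), and from $v_{s+1},\ldots,v_n$ together with $M(v_s)$ (leaving $f_5(m)-f_4(m)-m=f_3(f_4(m))$ colors, enough to reach target $f_4(m)$). After both applications, $M(v_{s-1})$ is disjoint from $M(v_i)$ for every $i\neq s-1$: disjoint from the filtered prefix and suffix by construction, and disjoint from $M(v_s)$ because $L$ is proper. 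Since every repetition that is not already killed by one of the two sub-applications must include $v_{s-1}$, and the color of $v_{s-1}$ can never match its mirror, no repetition survives. This single disjointness fact handles both directions of your case split simultaneously; the "Case~B" that blocked you never needs a separate argument. Note also that your Stage~1 removes only $L(v_s)$, whereas the paper additionally removes $M(v_{s-1})$ from the suffix — that is exactly where the extra $+m$ in $f_5$ is spent, not in the prefix.
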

\begin{proof}
  Set $M(v_s) = L(v_s)$.
  If $s>1$, then choose any $m$ colors for $M(v_{s-1})$ from $L(v_{s-1})$.
  Next, for each $i < s-1$, set $L'(v_i) = L(v_i) \setminus M(v_{s-1})$.
  $L'$ is a proper $f_3(m)$-list assignment of $\brac{v_1,\ldots,v_{s-2}}$.
  Apply Theorem~\ref{thm:path} to $L'$.
  The resulting $m$-list assignment is non-repetitive and uses the set of colors disjoint with $M(v_{s-1})$.
  Thus, we get a non-repetitive list assignment $M$ of $\brac{v_1,\ldots,v_{s-1}}$.

  Then, for each $i>s$, set $L''(v_i) = L(v_i) \setminus \brac{M(v_s) \cup M(v_{s-1})}$, or $L''(v_i) = L(v_i) \setminus M(v_s)$ if $s=1$.
  $L''$ is a proper $f_3(f_4(m))$-list assignment of $\brac{v_{s+1},\ldots,v_n}$.
  Apply Theorem~\ref{thm:path} to $L''$.
  The resulting $f_4(m)$-list assignment is non-repetitive and uses the set of colors disjoint with $M(v_s)$.
  Thus, we get a non-repetitive list assignment $M$ of $\brac{v_{s},\ldots,v_{n}}$.

  Suppose, that there is a repetition in $M$.
  Such a repetition must include vertex $v_{s-1}$.
  Observe that $M(v_{s-1})$ and $M(v_s)$ are disjoint as $L$ is a proper list assignment.
  The set of colors $M(v_{s-1})$ was removed from the list of permissible colors for every other vertex.
  Thus, the color of $v_{s-1}$ is not repeated.
  A contradiction.
\end{proof}

For a walk $W$ in a graph, a \emph{simple $W$-block} is a block of $W$ that is a simple path.
For the next lemma, set
$$f_6(m)=f_5(f_5(m))=\Oh{m^{81}}\text{.}$$
\begin{lemma}[Walk Filtering Lemma]\label{lem:walk}
  Let $W$ be a facial walk in the graph, and let $L$ be a proper $f_6(m)$-list assignment of $W$.
  There is an $m$-list assignment $M \subseteq L$ of $W$ such that for any simple $W$-block $P$, $M$ is a non-repetitive list assignment of $P$.
\end{lemma}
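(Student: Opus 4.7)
The plan is to decompose $W$ into its maximal simple $W$-blocks $M_1, \ldots, M_k$, each of which is a simple path in the underlying graph, and apply Lemma~\ref{lem:path} to each of them. Every simple $W$-block is a contiguous subsequence of some $M_i$, so non-repetitiveness on every $M_i$ automatically implies non-repetitiveness on every simple $W$-block. Moreover, any block of length at most three is automatically non-repetitive whenever the list assignment is proper (since a repetition must have even length at least four), so only the $M_i$ of length at least four require explicit filtering.

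The list size $f_6(m) = f_5(f_5(m))$ accommodates two nested scales of Lemma~\ref{lem:path}. First I would apply it to one chosen block $M^*$ with scale parameter $m' := f_5(m)$; the input requirement $f_5(m')$ equals $f_6(m)$, matching the initial list size, and the output makes $M^*$ non-repetitively $m'$-colorable while leaving each of its vertices with a list of size at least $f_5(m)$. I would then apply Lemma~\ref{lem:path} with scale $m$ to each remaining $M_i$ of length at least four, in sequence. For each such application, the selected vertex is chosen to be a vertex shared with a previously processed block, i.e., a pinch vertex of $W$. Keeping pinch vertices in the selected role preserves their list size at $f_4(m)$ across all applications, while the fresh vertices on the right of each selected vertex have starting lists of size at least $f_5(m)$---either untouched at $f_6(m)$, or reduced by the first application to $f_5(m)$ or $f_4(f_5(m))$, both of which suffice.

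After all applications, any list still larger than $m$ is arbitrarily trimmed; non-repetitiveness is preserved under taking subsets of lists. The main obstacle is verifying that a consistent ordering of the applications and choice of selected vertices exists for every facial walk, including those with many pinch vertices or a bouquet-like structure in which consecutive maximal simple blocks use different pairs of edges incident to a common pinch vertex. A careful structural analysis of how the pinch vertices of $W$ arrange the maximal simple blocks---into a path-like or tree-like pattern along $W$, with each overlap localized around a single pinch---supplies the required consistency and is where the bulk of the bookkeeping in a full write-up would go.
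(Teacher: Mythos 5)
Your high-level plan---decompose $W$ into maximal simple $W$-blocks, apply Lemma~\ref{lem:path} to each with a carefully chosen selected vertex $s$, and use the nested list sizes $f_6(m)=f_5(f_5(m))$ to survive two scales of filtering---is the same skeleton as the paper's proof. But the part you explicitly defer as ``bookkeeping'' is not bookkeeping; it is the entire content of the lemma, and the specific prescription you do give does not work.

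Concretely, your rule ``choose the pinch vertex as $s$'' fails already on $W=(a,b,a,c,a,d)$, where the maximal blocks are $P_1=(a,b)$, $P_2=(b,a)$, $P_3=(a,c)$, $P_4=(c,a)$, $P_5=(a,d)$. Processing $P_1$ at scale $f_5(m)$ and then $P_2,P_3$ at scale $m$ leaves $a$ with a list of size $f_4(m)$; when you reach $P_4=(c,a)$ and take $s=c$ (the pinch of $P_3$ and $P_4$), the vertex $a$ sits to the right of $s$ and Lemma~\ref{lem:path} demands $f_5(m)>f_4(m)$ colors there, which $a$ no longer has. To recover you must instead take $s$ to be the \emph{last previously-touched} vertex of the block, so that everything to its right is still fresh. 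Justifying that this choice is always available and consistent is exactly what the paper's proof does, and it requires three structural facts you have not established: (i) the \emph{laminar} nesting of repeated occurrences in a facial walk, proved via a Jordan-curve argument (this is the only place planarity enters, and without it the ``tree-like pattern'' you allude to need not exist); (ii) the consequence that consecutive maximal blocks overlap in a suffix/prefix while $P_i$ and $P_{i+2}$ are disjoint as intervals, which yields a \emph{two-coloring} of the maximal blocks into non-overlapping families; and (iii) the $Only$/$First$/$Middle$/$Last$ classification of vertices in each block, together with the invariants on list sizes maintained across the sequence of applications. Your split into ``one block at scale $f_5(m)$, then all the rest at scale $m$'' also differs from the paper's two-round scheme (all odd-indexed blocks at scale $f_5(m)$, then all even-indexed blocks at scale $m$) and there is no argument that it suffices when many non-consecutive blocks share a vertex. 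As written, the proposal identifies the right ingredients but leaves the core argument unproven.
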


\begin{proof}
  Observe, that the repeated occurrences of vertices in $W$ have the following laminar structure.
  Let $w_i = w_j = v$, and let $u$ be any vertex in $W$ other than $v$.
  Then, either all occurrences of $u$ are in $W[i+1,j-1]$, or all occurrences of $u$ are in $W[1,i-1] \cup W[j+1,n]$.
  Assume to the contrary $w_a = w_b = u$ and $i < a < j < b$.
  Each of the three walks $W[i,a]$, $W[a,j]$, $W[j,b]$ connects $u$ and $v$.
  These three walks divide the plane into at least three regions, and thus $W$ is not a facial walk.

  We say that a path $P$ is a \emph{maximal simple $W$-block} if $P$ is a simple $W$-block and $P$ cannot be extended into either direction in $W$ and remain a simple block.
  Let $\mathcal{P}$ denote the set of all maximal simple $W$-blocks.
  We show that $\mathcal{P}$ can be decomposed into two sets, say red and green so that any two paths of the same color are non-overlapping $W$-blocks.
  Let $\mathcal{P} = \set{P_1=W[l_1,r_1], \ldots, P_k=W[l_k,r_k]}$ and without loss of generality $l_1 < l_2 < \ldots < l_k$.
  It follows from the maximality of each path that $r_1 < r_2 < \ldots < r_k$.
  See Figure~\ref{fig:walk}.

  For a path $P_i$, we know that $P_i$ cannot be extended to the right, so either $i=k$ and $r_i = n$, or the vertex $w_{r_i+1}$ is already on path $P_i$.
  Assume the second case and let $j$ be such that $w_j = w_{r_i+1}$ and $l_i \leq j < r_i$.
  The fact that $P_i$ is a simple path, and the laminar structure of repeated occurrences guarantee that each vertex $w_{j+1},\ldots,w_{r_i}$ has no other occurrences in $W$.
  Thus, any simple $W$-block that includes $w_j$ does not include $w_{r_i+1}$ and any simple $W$-block that includes $w_{r_i+1}$ can be extended to the left so that it includes $w_{j+1}$.
  Hence, $l_{i+1} = j+1$.
  Further, if $i+2 \leq k$, we can repeat the same reasoning for $P_{i+1}$ and get that $l_{i+2}$ is to the right of $j+1$ and to the right of some vertex with repeated occurrence.
  As vertices $w_{j+1},\ldots,w_{r_i}$ have no other occurrences in $W$, we get that $l_{i+2} > r_i$.
  Thus, paths $P_i$ and $P_{i+2}$ are non-overlapping $W$-blocks.
  We color red each path $P_i$ for $i$ odd, and color green each path $P_i$ for $i$ even.

  Let $t=f_5(m)$.
  Now, we filter list assignment $L$ so that we get a $t$-list assignment in which there is no repetition on any subpath of a red path.
  We process paths $P_1, P_3, \ldots$, one by one in this order.
  We will keep the following invariants that are satisfied after processing each path $P_i$:
  \begin{itemize}
    \item each vertex $v$ that has no occurrences on paths $P_1,\ldots,P_i$ has at least $f_6(m) = f_5(t)$ permissible colors;
    \item each vertex $v$ that has at least one occurrence on paths $P_1,\ldots,P_i$ and at least one occurrence on paths $P_{i+1},\ldots,P_k$ has at least $f_4(t)$ permissible colors;
    \item each vertex $v$ that has no occurrences on paths $P_{i+1},\ldots,P_k$ has at least $t$ permissible colors.
  \end{itemize}
  When processing path $P_i=\brac{w_{l_i},\ldots,w_{r_i}}$, we divide vertices in $P_i$ into four disjoint sets:
  \begin{itemize}
    \item $Only$ -- a vertex $v$ is in $Only$ if there are no occurrences of $v$ on paths other than $P_i$.
    \item $First$ -- a vertex $v$ is in $First$ if there are no occurrences of $v$ on paths $P_{1},\ldots,P_{i-1}$, and at least one on paths $P_{i+1},\ldots,P_k$.
    \item $Middle$ -- a vertex $v$ is in $Middle$ if there is at least one occurrence of $v$ on paths $P_1,\ldots,P_{i-1}$, and at least one on paths $P_{i+1},\ldots,P_k$.
    \item $Last$ -- a vertex $v$ is in $Last$ if there is at least one occurrence of $v$ on paths $P_{1}\ldots,P_{i-1}$, and no occurrences on paths $P_{i+1},\ldots,P_k$.
  \end{itemize}
  Our invariants guarantee that each vertex in $Only \cup First$ has at least $f_5(t)$ permissible colors,
  and that each vertex in $Middle \cup Last$ has at least $f_4(t)$ permissible colors.
  The fact that $P_i$ is a simple path, and the laminar structure of repeated occurrences guarantee that:
  \begin{itemize}
    \item There is at most one vertex in $Middle$.
    \item Every vertex in $Last$ is before any vertex in $Middle \cup First$.
    \item Every vertex in $First$ is after any vertex in $Middle \cup Last$.
  \end{itemize}

  Set vertex $s$ to be the last vertex in $Middle \cup Last \cup \set{w_{l_i}}$.
  Our invariants guarantee that:
  \begin{itemize}
    \item each vertex to the left of $s$ has at least $f_4(t)$ colors.
    \item $s$ has at least $f_4(t)$ colors.
    \item each vertex to the right of $s$ has at least $f_5(t)$ colors.
  \end{itemize}
  We apply Lemma~\ref{lem:path} to path $P_i$ with special vertex $s$ and get new list assignment.
  We get that each vertex in $Last$ keeps at least $t$ colors.
  Vertex in $Middle$, if it exists, keeps at least $f_4(t)$ colors.
  Any vertex in $First$ keeps at least $f_4(t)$ colors.
  Thus, our invariants are satisfied after $P_i$ is processed.
  When we process all red paths, we get an $f_5(m)$-list assignment of $W$ in which there is no repetition on any simple path, a subpath of a red maximal simple $W$-block.
  To finish the proof, we repeat the same process for $t=m$ and green maximal simple $W$-blocks.
\end{proof}

\begin{figure}[h]
  \begin{center}
\begin{tikzpicture}[>=latex]
\begin{scope}[scale=0.45]
  \begin{tiny}
    \node[anchor=base] (a1) at (-0.5, 0) {$W:$};
    \draw (1,0.15) -- (30,0.15);

  \coordinate (a1) at (1, 0) {};
  \coordinate (b1) at (2, 0) {};
  \coordinate (c1) at (3, 0) {};
  \coordinate (d1) at (4, 0) {};
  \coordinate (e1) at (5, 0) {};
  \coordinate (c2) at (6, 0) {};
  \coordinate (f1) at (7, 0) {};
  \coordinate (g1) at (8, 0) {};
  \coordinate (c3) at (9, 0) {};
  \coordinate (h1) at (10, 0) {};
  \coordinate (b2) at (11, 0) {};
  \coordinate (i1) at (12, 0) {};
  \coordinate (j1) at (13, 0) {};
  \coordinate (k1) at (14, 0) {};
  \coordinate (l1) at (15, 0) {};
  \coordinate (j2) at (16, 0) {};
  \coordinate (m1) at (17, 0) {};
  \coordinate (n1) at (18, 0) {};
  \coordinate (j3) at (19, 0) {};
  \coordinate (b3) at (20, 0) {};
  \coordinate (o1) at (21, 0) {};
  \coordinate (a2) at (22, 0) {};
  \coordinate (p1) at (23, 0) {};
  \coordinate (q1) at (24, 0) {};
  \coordinate (r1) at (25, 0) {};
  \coordinate (s1) at (26, 0) {};
  \coordinate (t1) at (27, 0) {};
  \coordinate (q2) at (28, 0) {};
  \coordinate (u1) at (29, 0) {};
  \coordinate (a3) at (30, 0) {};

  \node[anchor=base,fill=white] (sa1) at (a1) {$a$};
  \node[anchor=base,fill=white] (sb1) at (b1) {$b$};
  \node[anchor=base,fill=white] (sc1) at (c1) {$c$};
  \node[anchor=base,fill=white] (sd1) at (d1) {$d$};
  \node[anchor=base,fill=white] (se1) at (e1) {$e$};
  \node[anchor=base,fill=white] (sc2) at (c2) {$c$};
  \node[anchor=base,fill=white] (sf1) at (f1) {$f$};
  \node[anchor=base,fill=white] (sg1) at (g1) {$g$};
  \node[anchor=base,fill=white] (sc3) at (c3) {$c$};
  \node[anchor=base,fill=white] (sh1) at (h1) {$h$};
  \node[anchor=base,fill=white] (sb2) at (b2) {$b$};
  \node[anchor=base,fill=white] (si1) at (i1) {$i$};
  \node[anchor=base,fill=white] (sj1) at (j1) {$j$};
  \node[anchor=base,fill=white] (sk1) at (k1) {$k$};
  \node[anchor=base,fill=white] (sl1) at (l1) {$l$};
  \node[anchor=base,fill=white] (sj2) at (j2) {$j$};
  \node[anchor=base,fill=white] (sm1) at (m1) {$m$};
  \node[anchor=base,fill=white] (sn1) at (n1) {$n$};
  \node[anchor=base,fill=white] (sj3) at (j3) {$j$};
  \node[anchor=base,fill=white] (sb3) at (b3) {$b$};
  \node[anchor=base,fill=white] (so1) at (o1) {$o$};
  \node[anchor=base,fill=white] (sa2) at (a2) {$a$};
  \node[anchor=base,fill=white] (sp1) at (p1) {$p$};
  \node[anchor=base,fill=white] (sq1) at (q1) {$q$};
  \node[anchor=base,fill=white] (sr1) at (r1) {$r$};
  \node[anchor=base,fill=white] (ss1) at (s1) {$s$};
  \node[anchor=base,fill=white] (st1) at (t1) {$t$};
  \node[anchor=base,fill=white] (sq2) at (q2) {$q$};
  \node[anchor=base,fill=white] (su1) at (u1) {$u$};
  \node[anchor=base,fill=white] (sa3) at (a3) {$a$};


    \draw (sa1) edge[out=45,in=135] (sa2); \draw (sa2) edge[out=45,in=135] (sa3);
    \draw (sb1) edge[out=45,in=135] (sb2); \draw (sb2) edge[out=45,in=135] (sb3);
    \draw (sc1) edge[out=45,in=135] (sc2); \draw (sc2) edge[out=45,in=135] (sc3);
    \draw (sj1) edge[out=45,in=135] (sj2); \draw (sj2) edge[out=45,in=135] (sj3);
    \draw (sq1) edge[out=45,in=135] (sq2);

    \draw[thick,red]   (sa1) -- ($(a1)-(0,1.00)$) -- ($(e1)-(0,1.00)$) -- (se1); \node[red]   (p1) at ($(a1)!0.5!(e1)-(0,1.50)$) {$P_1$};
    \draw[thick,green] (sd1) -- ($(d1)-(0,1.25)$) -- ($(g1)-(0,1.25)$) -- (sg1); \node[green] (p2) at ($(d1)!0.5!(g1)-(0,1.75)$) {$P_2$};
    \draw[thick,red]   (sf1) -- ($(f1)-(0,1.50)$) -- ($(l1)-(0,1.50)$) -- (sl1); \node[red]   (p3) at ($(f1)!0.5!(l1)-(0,2.00)$) {$P_3$};
    \draw[thick,green] (sk1) -- ($(k1)-(0,1.75)$) -- ($(n1)-(0,1.75)$) -- (sn1); \node[green] (p4) at ($(k1)!0.5!(n1)-(0,2.25)$) {$P_4$};
    \draw[thick,red]   (sm1) -- ($(m1)-(0,2.00)$) -- ($(t1)-(0,2.00)$) -- (st1); \node[red]   (p5) at ($(m1)!0.5!(t1)-(0,2.50)$) {$P_5$};
    \draw[thick,green] (sr1) -- ($(r1)-(0,2.25)$) -- ($(a3)-(0,2.25)$) -- (sa3); \node[green] (p6) at ($(r1)!0.5!(a3)-(0,2.75)$) {$P_6$};
  \end{tiny}
\end{scope}
\end{tikzpicture}
\end{center}
\caption{
  A facial walk $W$.
  Above the walk, the laminar structure of repeated occurences of vertices.
  Below the walk, maximal simple $W$-blocks colored red and green.
}
\label{fig:walk}
\end{figure}
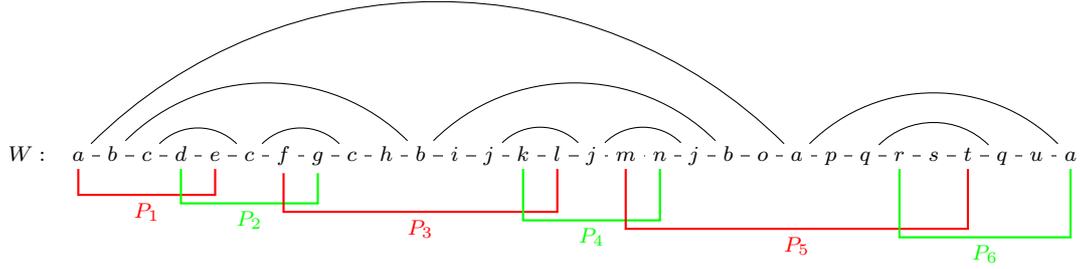

For the next lemma, set
$$f_7(m)=f_6(m) + 10m=\Oh{m^{81}}\text{.}$$
\begin{lemma}[Face Filtering Lemma]\label{lem:face}
  Let $L$ be a facially-square-proper $f_7(m)$-list assignment of a face $F$, and let $s$ and $t$ be the special vertices for $F$.
  There is a non-repetitive list assignment $M \subseteq L$ of $F$ such that:
  \begin{itemize}
    \item $M(s) = L(s)$, and $M(t) = L(t)$,
    \item $\norm{M(v)} \geq m$, for each $v$ in $F$.
  \end{itemize}
\end{lemma}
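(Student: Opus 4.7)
The plan is to prove the Face Filtering Lemma by preserving $L(s)$ and $L(t)$ at the special vertices and filtering the remaining boundary vertices of $F$ down to $m$ colors each. I would set $M(s) = L(s)$ and $M(t) = L(t)$ at the outset, then focus on choosing $M(v)$ for the other boundary vertices so that no facial path of $F$ admits a repetition, regardless of how colors are picked from $L(s)$ and $L(t)$.

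The structural backbone in the 2-connected case is the decomposition of the boundary of $F$ (a cycle) by $s$ and $t$ into two arcs $P_L$ and $P_R$. Every facial path of $F$ is an arc of the boundary cycle, and every such arc either avoids $s$, avoids $t$, or contains both. Facial paths avoiding $s$ form simple blocks of the facial walk $W_s$ obtained by deleting $s$ from the boundary walk, and symmetrically for $W_t$. I would apply Lemma~\ref{lem:walk} to $W_s$ and $W_t$ after trimming the lists of the non-special vertices from $f_7(m) = f_6(m) + 10m$ down to $f_6(m)$. This handles all repetitions on facial paths that avoid at least one of $s, t$.

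The $10m$ extra colors would be used to handle repetitions on facial paths that genuinely involve $s$ or $t$. The facially-square-proper hypothesis already rules out repetitions whose echo of $s$ (respectively $t$) lies at facial distance at most $2$ from $s$ (respectively $t$), so only repetitions of length at least $6$ through a special vertex are at stake. For these, I would remove from each non-special vertex's list a set of at most $10m$ colors chosen so that, for every potential repetition through $s$ or $t$, at least one non-special pair of positions in the repetition pattern ends up with disjoint filtered lists, blocking the repetition regardless of the colors assigned at $s$ and $t$.

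The main obstacle will be designing this $10m$-color removal scheme. Since $L(s)$ and $L(t)$ can each have $f_7(m)$ colors, one cannot simply exclude $L(s) \cup L(t)$ from the other lists. Instead, the removal must exploit the planar (or bipolar) structure of $F$: each non-special boundary vertex $v$ plays only a bounded number of echo roles relative to $s$ and $t$, essentially indexed by its facial distances to them along $P_L$ and $P_R$. I expect the removal to be organized via two applications of Theorem~\ref{thm:five_color}, one per special vertex, each spending $5m$ colors and working on an auxiliary planar graph encoding the echo-pair relation across $s$ (resp.\ $t$); this matches the $10m$ budget exactly and, combined with the Walk Filtering Lemma on $W_s$ and $W_t$, delivers the required $M$.
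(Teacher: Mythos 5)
Your plan breaks down at the step where you want to invoke the Walk Filtering Lemma (Lemma~\ref{lem:walk}) on the two walks $W_s$ and $W_t$. These two walks overlap in every boundary vertex except $s$ and $t$, and the lemma turns an $f_6(m)$-list assignment into an $m$-list assignment. After the first application (say to $W_s$) the shared vertices are already down to $m$ colors, so there is no room for a second application to $W_t$; to run it twice sequentially you would need roughly $f_6(f_6(m))$ colors to start, not $f_6(m)+10m$. There is also a more basic mismatch: $W_s$, the boundary walk with $s$ deleted, still contains $t$, so filtering $W_s$ via Lemma~\ref{lem:walk} would shrink $t$'s list, contradicting the required conclusion $M(t) = L(t)$. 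Finally, your $10m$-color removal scheme (two passes of Theorem~\ref{thm:five_color} over auxiliary ``echo-pair'' planar graphs) is left as a hope rather than a construction; it is not clear what the auxiliary graph is, why it is planar, or why disjointness of a single echo pair suffices to kill every repetition through a special vertex for \emph{every} choice of colors of $s$ and $t$.

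The paper avoids all three problems by a different decomposition. It does not split the boundary into $W_s$ and $W_t$. Instead it forms the set $A$ of regular neighbours of $s$ and $t$ on $F$ (at most ten such vertices around any point of $F$), gives each $v\in A$ an $m$-color list $M(v)$ chosen so that lists in $A$ that are close along $F$ are pairwise disjoint, and removes the (at most $10m$) colors used on $A$ from every other regular vertex; this is exactly where the $10m$ budget goes. A short case analysis on $a(P)=|P\cap A|$ (using the facially-square-proper hypothesis to force $|P|\ge 6$ and the fact that any two vertices of $A$ in $P$ are separated by a special vertex) shows that no repetition in the filtered assignment can touch $A\cup\{s,t\}$. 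Removing $A\cup\{s,t\}$ from the boundary leaves \emph{pairwise vertex-disjoint} walks, to each of which Lemma~\ref{lem:walk} is applied once, independently, with the untouched $f_6(m)$ budget. So the single ingredient you are missing is the shielding role of $A$: it lets one pass of the Walk Filtering Lemma on disjoint pieces do the whole job, rather than two incompatible passes on overlapping walks.
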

\begin{proof}
  Let $A$ denote the set of all regular neighbors of $s$ and all regular neighbors of $t$ in $F$.
  For any vertex $v$ in $F$ let $A_5(v)$ denote the five vertices in $A$ encountered first when traversing $F$ in clockwise, and in counter-clockwise direction from $v$.
  Each set $A_5(v)$ has at most ten elements.
  See Figure~\ref{fig:face}.

  First we define $M$ for vertices in $A$.
  For each $v \in A$ we choose any $m$ colors from $L(v)$ so that $M(v)$ is disjoint with $M(w)$ for any $w \in A_5(v)$.
  As for each $v \in A$, the size of $A_5(v)$ is at most ten, and $\norm{L(v)} \geq 11m$, this can be easily done.
  Now, for each regular vertex $v$ not in $A$, remove from $L(v)$ the colors $M(w)$ for each $w \in A_5(v)$.
  This removes at most $10m$ colors from each list $L(v)$.
  The resulting list assignment $L'$ has $f_6(m)$ colors for each regular vertex not in $A$.
  Further, set $L'(v) = M(v)$ for $v$ in $A \cup \set{s,t}$.

  Let $P$ be a facial path in $F$ such that there exists an $L'$-coloring $c$ of $P$ such that the sequence of colors of vertices in $P$ is a repetition.
  Observe that, as $L$ is a facially-square-proper list assignment of $F$, we have that $P$ has at least six vertices.
  Let $A(P)$ denote the vertices in $P \cap A$, and let $a(P) = \norm{A(P)}$.
  First, observe that $P$ is a simple path and that between any three vertices in $A(P)$ there is at least one special vertex.
  Thus we have that $a(P) \leq 6$.
  Furthermore, if $a(P) = 6$ then both endpoints of $P$ are in $A$.
  Thus, we have $A(P) \subseteq A_5(v)$ for any regular vertex $v$ in $P$.
  Hence, only a special vertex can match the color of a vertex in $A(P)$ and we have that $a(P) \leq 2$, as there are only two special vertices.

  If $a(P) = 2$, then $P$ contains exactly two vertices $a_1, a_2$ in $A$, and exactly two special vertices.
  If both special vertices are in the first half of $P$, then, as there are at least three vertices in the first half, there is at least one vertex $a$ in $A$ in the first half of $P$ and colors of $a$ cannot be matched in the second half.
  Thus, in each half of $P$ there is exactly one special vertex, and exactly one vertex in A.
  Without loss of generality, assume that vertices $s$ and $a_1$ are in the first half of $P$.
  If $s$ is neither the first, nor the last vertex in the first half of $P$, then there are two vertices in $A$ in that half of $P$ -- one to the left, and one to the right of $s$.
  Similarly, $t$ is either the first, or the last vertex in the second half of $P$.
  On the other hand, we get that $a_1$, which is next to $s$ in the first half, is neither the first, nor the last vertex in the first half of $P$.
  Thus, color of $a_1$ cannot be matched by color of $t$.
  A contradiction.

  If $a(P) = 1$, then let $A = \set{a_1}$ and assume that $a_1$ is in the first half of $P$.
  Color of $a_1$ is matched by the color of a special vertex $w$ in the second half of $P$.
  As the last vertex in $P$ before $w$ is in $A$, we have that $a_1$ and $w$ are neighbors in $G$.
  As $L$ is a proper list assignment of $F$, we have that $L(a_1)$ and $L(w)$ are disjoint.
  A contradiction.

  Thus, we have that any simple path with a repetition of colors in $L'$ does not include neither a vertex in $A$, nor a special vertex.
  We will further filter list assignment $L'$ to remove all such repetitions.
  Let $W_1, \ldots, W_k$ be the connected components of the boundary of $F$ with vertices in $A \cup \set{s,t}$ removed.
  Observe that walks $W_1, \ldots, W_k$ are pairwise vertex disjoint.
  List assignment $L'$ is a proper $f_6(m)$-list assignment of each walk $W_1,\ldots,W_k$.
  We apply Lemma~\ref{lem:walk} to $L'$ and each walk $W_1, \ldots, W_k$ independently.
  We obtain a facially-non-repetitive list assignment $M$ of $F$.
\end{proof}

\begin{figure}[h]
  \begin{center}
\begin{tikzpicture}[>=latex]
  \begin{scope}[yscale=2,xscale=2]

  \tikzstyle{every node}=[red, circle,minimum size=5pt,inner sep=0pt,draw,fill,label distance=1pt]
  \tikzstyle{triangle}=[blue, regular polygon,regular polygon sides=4,minimum size=5pt,inner sep=0pt,draw,fill,label distance=1pt]

\coordinate (c) at (0,0) {};
\coordinate (s) at (0, -1) {};
\coordinate (t) at (0, 1) {};
\coordinate (l1) at ($ (c)!1!{1*(-360/10)}:(s) $) {};
\coordinate (l2) at ($ (c)!1!{2*(-360/10)}:(s) $) {};
\coordinate (l3) at ($ (c)!1!{3*(-360/10)}:(s) $) {};
\coordinate (l4) at ($ (c)!1!{4*(-360/10)}:(s) $) {};
\coordinate (r1) at ($ (c)!1!{1*( 360/10)}:(s) $) {};
\coordinate (r2) at ($ (c)!1!{2*( 360/10)}:(s) $) {};
\coordinate (r3) at ($ (c)!1!{3*( 360/10)}:(s) $) {};
\coordinate (r4) at ($ (c)!1!{4*( 360/10)}:(s) $) {};
\coordinate (s1) at (r1) {};
\coordinate (s12) at ($ (s)!1.5!{0.5*(180*8/10)/5}:(s1) $) {};
\coordinate (s2) at ($ (s)!1!{(180*8/10)/5}:(s1) $) {};
\coordinate (s23) at ($ (s)!1.5!{1.5*(180*8/10)/5}:(s1) $) {};
\coordinate (s3) at ($ (s)!1!{2*(180*8/10)/5}:(s1) $) {};
\coordinate (s34) at ($ (s)!1.5!{2.5*(180*8/10)/5}:(s1) $) {};
\coordinate (s4) at ($ (s)!1!{3*(180*8/10)/5}:(s1) $) {};
\coordinate (s45) at ($ (s)!1.5!{3.5*(180*8/10)/5}:(s1) $) {};
\coordinate (s5) at ($ (s)!1!{4*(180*8/10)/5}:(s1) $) {};
\coordinate (s56) at ($ (s)!1.5!{4.5*(180*8/10)/5}:(s1) $) {};
\coordinate (s6) at (l1) {};
\coordinate (t1) at (l4) {};
\coordinate (t2) at ($ (t)!1!{(180*8/10)/4}:(t1) $) {};
\coordinate (t23) at ($ (t)!1.5!{1.5*(180*8/10)/4}:(t1) $) {};
\coordinate (t3) at ($ (t)!1!{2*(180*8/10)/4}:(t1) $) {};
\coordinate (t34) at ($ (t)!1.5!{2.5*(180*8/10)/4}:(t1) $) {};
\coordinate (t4) at ($ (t)!1!{3*(180*8/10)/4}:(t1) $) {};
\coordinate (t45) at ($ (t)!1.5!{3.5*(180*8/10)/4}:(t1) $) {};
\coordinate (t5) at (r4) {};

    \draw (s) -- (l1); \draw (l4) -- (t) -- (r4); \draw (r1) -- (s);

    \draw[green] (r1) arc (-54:54:1);
    \draw[green] (l4) arc (126:234:1);

    \draw (t) -- (t2);
    \draw[green] (t2) .. controls (t23) .. (t3);
    \draw (t3) -- (t) -- (t4);
    \draw[green] (t4) .. controls (t34) and (t45) .. (t4);

    \draw (s) -- (s2);
    \draw[green] (s2) .. controls (s23) .. (s3);
    \draw (s) -- (s3);
    \draw (s) -- (s4);
    \draw[green] (s4) .. controls (s45) .. (s5);
    \draw (s) -- (s5);

    \draw[green] (r2) .. controls (t45) and (t34) .. (r2);
    \draw[green] (r2) .. controls (s12) and (s23) .. (r2);

    \node (sn) at (s) {};
    \node (tn) at (t) {};
    \node[triangle] (s1n) at (s1) {};
    \node[triangle] (s2n) at (s2) {};
    \node[triangle] (s3n) at (s3) {};
    \node[triangle] (s4n) at (s4) {};
    \node[triangle] (s5n) at (s5) {};
    \node[triangle] (s6n) at (s6) {};
    \node[triangle] (t1n) at (t1) {};
    \node[triangle] (t2n) at (t2) {};
    \node[triangle] (t3n) at (t3) {};
    \node[triangle] (t4n) at (t4) {};
    \node[triangle] (t5n) at (t5) {};
    \node[black,draw=none,fill=none] (fl) at (c) {$F$};
    \node[black,draw=none,fill=none] (sl) at ($(s)+(0, -0.25)$) {$s(F)$};
    \node[black,draw=none,fill=none] (tl) at ($(t)+(0, 0.25)$) {$t(F)$};
\end{scope}

\end{tikzpicture}
\end{center}
\caption{
  Filtering of list assignment of face $F$ in Lemma~\ref{lem:face}.
  Special vertices marked with red circles.
  Vertices in $A$ marked with blue squares.
  Walks $W_1,\ldots,W_k$ colored green.
}
\label{fig:face}
\end{figure}
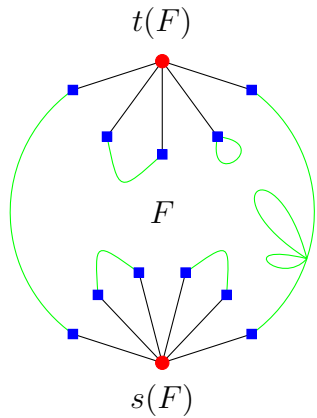

Now we combine Lemmas~\ref{lem:drawing}, \ref{lem:square}, and~\ref{lem:face} to prove the main theorem of this paper.
For the proof of the main result, set
$$f_8(m)=f_2(f_7(f_7(f_7(f_7(m)))))=\Oh{m^{43046721}}\text{.}$$

\hackcounter{thm:main}
\begin{theorem}\leavevmode\newline
  Every plane graph is facially-non-repetitively $\brac{\Oh{m^{{43046721}}}:m}$-choosable.
\end{theorem}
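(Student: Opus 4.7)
The plan is to follow the strategy informally sketched at the start of Section~2 in three stages.
First, with any two vertices $s,t$ chosen on the external face of $G$, apply Lemma~\ref{lem:drawing} to label each vertex-face occurrence as regular or special, giving every face two special vertices and every vertex at most two faces for which it is regular.
Second, apply Lemma~\ref{lem:square} to the given $f_8(m)$-list assignment $L$ to obtain a facially-square-proper list assignment $L_0 \subseteq L$ in which every vertex has at least $f_7^{(4)}(m)$ colors; here $f_7^{(k)}$ denotes the $k$-fold composition of $f_7$, and the identity $f_8(m) = f_2\brac{f_7^{(4)}(m)}$ makes the fit exact.

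The third and main stage applies Lemma~\ref{lem:face} face by face, preceded by a $4$-coloring of the faces.
Define the auxiliary graph $H$ whose vertices are the faces of $G$ and whose edges are the pairs $\set{F,F'}$ for which some vertex $v$ of $G$ is regular for both $F$ and $F'$.
Since every vertex is regular for at most two faces, each vertex of $G$ contributes at most one edge of $H$, and drawing this edge as a curve from a point inside $F$ through $v$ to a point inside $F'$ (otherwise staying in the open interiors of $F$ and $F'$) gives a planar drawing of the underlying simple graph of $H$.
Theorem~\ref{thm:four_color} then yields a partition of the faces of $G$ into classes $\mathcal{C}_1, \mathcal{C}_2, \mathcal{C}_3, \mathcal{C}_4$ such that no two faces in the same class share any regular vertex.

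Processing the classes in order, I maintain the invariant that after round $i$ the current assignment $L_i \subseteq L_{i-1}$ is facially-square-proper, has at least $f_7^{(4-i)}(m)$ colors at every vertex, and is facially-non-repetitive on every face of $\mathcal{C}_1 \cup \cdots \cup \mathcal{C}_i$.
In round $i$, every vertex of every $F \in \mathcal{C}_i$ carries at least $f_7^{(5-i)}(m) = f_7\brac{f_7^{(4-i)}(m)}$ colors, which is exactly what Lemma~\ref{lem:face} requires to produce a non-repetitive assignment of size $f_7^{(4-i)}(m)$ at the regular vertices of $F$, leaving the two special vertices of $F$ untouched.
Because no two faces of $\mathcal{C}_i$ share a regular vertex, the outputs for different faces modify disjoint sets of lists and combine unambiguously into $L_i$; any collateral shrinking of a special vertex's list on $F$ (coming from that vertex being regular for a different face of $\mathcal{C}_i$) preserves non-repetition on $F$, by monotonicity of Lemma~\ref{lem:face}'s guarantee in the lists of its special vertices.
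After the fourth round, $M := L_4$ is an $m$-list assignment contained in $L$ for which every $M$-coloring of $G$ is facially-non-repetitive, which is what the theorem claims.

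The main obstacle I anticipate is the rigorous verification that $H$ is planar: the curve-routing argument is transparent when each regular vertex lies on exactly two faces, but vertices that are cut-vertices of $G$ or incident to many faces deserve care to rule out unintended crossings, and one must discard parallel edges before invoking Theorem~\ref{thm:four_color}.
Once $H$ is shown to be planar, the accounting of list sizes across the four rounds is a direct consequence of the definition of $f_8$ and of the specifications of Lemmas~\ref{lem:square} and~\ref{lem:face}.
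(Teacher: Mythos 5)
Your proposal follows the paper's own proof step by step: Drawing Lemma to mark regular/special occurrences, Facial-Square Filtering to reduce to a facially-square-proper assignment of size $f_7^{(4)}(m)$, the auxiliary planar graph $H$ on faces $4$-colored via Theorem~\ref{thm:four_color}, and then four rounds of Lemma~\ref{lem:face}, one per color class. The extra remarks you add (monotonicity of Lemma~\ref{lem:face}'s conclusion under shrinking the special vertices' lists, and the need to pass to the underlying simple graph of $H$) are correct points that the paper leaves implicit, but they do not change the argument.
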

\unhackcounter
\begin{proof}
  Let $G$ be a plane graph and $L$ be an $f_8(m)$-list assignment of $G$.
  First, we use Lemma~\ref{lem:drawing} to divide occurrences of the vertices on the faces into regular and special.
  Next, we apply Lemma~\ref{lem:square} to obtain a facially-square-proper $f_7(f_7(f_7(f_7(m))))$-list assignment of $G$.
  Then, we construct an auxiliary graph $H$ on faces of $G$ in which we put an edge between two faces $F_1$, $F_2$ of $G$ when there is a vertex $v$ in $G$ that is regular both for $F_1$ and for $F_2$.
  Observe that graph $H$ is planar.
  Indeed, we can construct a planar drawing of $H$ from planar drawing of $G$ by placing a vertex corresponding to face $F$ anywhere in the face $F$ and routing an edge $\set{F_1,F_2}$ through the vertex that is regular both for $F_1$ and $F_2$.
  Theorem~\ref{thm:four_color} gives a proper coloring of $H$ with colors red, green, blue and yellow.
  Observe that, as no two red faces share a common regular vertex, we can apply Lemma~\ref{lem:face} to $L'$ and all red faces of $G$ simultaneously.
  We obtain an $f_7(f_7(f_7(m)))$-list assignment of $G$ in which there is no repetition on any facial path of a red face.
  We repeat the same three more times, for green, blue, and yellow faces.
  In the end we obtain a facially-non-repetitive $m$-list assignment $M$ of $G$.
\end{proof}

\section{Discussion}
  In order to get the final result, we need to compute value $f_8(1)$.
  A computer calculation gave us the value with approximately 33 million decimal digits.
  Thus, we get that any plane graph is facially-non-repetitively $C$-choosable for $C = 10^{4 \cdot 10^7}$.
  The presented proof is far from optimal and the polynomials in some of the lemmas can be improved at the expense of a more technical argument.
  Nevertheless, these improvements do not lead to any reasonable value $C$.
  We do not know any non-trivial lower bounds for $C$.

\bibliographystyle{plain}
\bibliography{paper}

\appendix
\section{Thomassen's proof}\label{app:five_color}

We present a slightly modified version of Thomassen's~\cite{Thomassen94} proof that gives the following statement.
We use almost the exact same wording as in the original proof to make it obvious that the same proof works.

\begin{theorem*}
  Let $G$ be a near-triangulation; \ie{}, $G$ is a planar graph which has no loops or multiple edges and which consists of a cycle $C$: $v_1 v_2 \cdots v_p v_1$, and vertices and edges inside $C$ such that each bounded face is bounded by a triangle.
  Assume that $v_1$ and $v_2$ are colored $\set{1,\ldots,m}$ and $\set{m+1,\ldots,2m}$, respectively, and that $L(v)$ is a list of at least $3m$ colors if $v \in C - \set{v_1,v_2}$ and at least $5m$ colors if $v \in G-C$.
  Then the coloring of $v_1$ and $v_2$ can be extended to a list $m$-coloring of $G$.
\end{theorem*}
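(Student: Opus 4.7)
The plan is to mimic Thomassen's original induction on $\norm{V(G)}$, tracking how the constants 3 and 5 become $3m$ and $5m$ once we are choosing $m$-subsets instead of single colors. The only real worry is verifying that a factor of exactly $m$ in each slot is enough of a budget to carry out both subcases.

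I would begin by setting up the inductive framework. A near-triangulation $G$ with outer cycle $C = v_1 v_2 \cdots v_p v_1$ and two disjoint $m$-sets already chosen for $v_1, v_2$. Write $M(v)$ for the $m$-set we eventually pick at a vertex $v$. The base case is essentially $p=3$: the unique remaining vertex $v_3$ has $\norm{L(v_3)} \geq 3m$ and must avoid $M(v_1) \cup M(v_2)$, which removes at most $2m$ colors, leaving $m$ to pick from.

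For the inductive step I would split on whether $C$ has a chord. If some $v_i v_j$ with $2 \leq i+1 < j$ is a chord, split $G$ along this chord into $G_1$ (containing the edge $v_1 v_2$) and $G_2$. Apply induction to $G_1$; this in particular produces disjoint $m$-sets $M(v_i), M(v_j)$. Now invoke induction on $G_2$ treating $v_i, v_j$ as the initially colored pair; the outer cycle of $G_2$ consists of vertices of $C$ together with the chord edge, all other boundary vertices still have lists of size $\geq 3m$, and interior vertices still have lists of size $\geq 5m$, so the hypotheses are met.

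If $C$ has no chord, consider $v_p$ and enumerate its neighbors in $G$ in order around $v_p$ as $v_1, u_1, u_2, \ldots, u_k, v_{p-1}$. Since $C$ is chordless, each $u_i$ is an interior vertex of $G$ and hence has $\norm{L(u_i)} \geq 5m$. The crucial maneuver is to reserve a pool of $2m$ colors at $v_p$ rather than the two colors Thomassen reserves. Concretely, pick any set $S \subseteq L(v_p) \setminus M(v_1)$ of size $2m$; this is possible because $\norm{L(v_p)} \geq 3m$ and $\norm{M(v_1)} = m$. Replace each $L(u_i)$ by $L(u_i) \setminus S$, which still has size $\geq 5m - 2m = 3m$. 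Apply induction to $G \grminus \set{v_p}$, whose outer cycle is $v_1 v_2 \cdots v_{p-1} u_k \cdots u_1$, with $v_1$ and $v_2$ still carrying their prescribed $m$-sets. The result is an $m$-set assignment $M$ on $G \grminus \set{v_p}$ with $M(u_i) \cap S = \emptyset$ for all $i$. Finally, to color $v_p$ we need an $m$-subset of $L(v_p)$ disjoint from $M(v_1) \cup M(v_{p-1}) \cup \bigcup_i M(u_i)$; inside $S$ the only obstruction is $M(v_{p-1})$, and $\norm{S \grminus M(v_{p-1})} \geq 2m - m = m$, so we pick $M(v_p)$ there.

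The only nontrivial point is verifying that the two places where the margin narrows to exactly $m$, namely $S \grminus M(v_{p-1})$ at the end of Case 2 and $L(u_i) \grminus S$ for the interior neighbors, both work out simultaneously; this is why the inflated sizes $3m$ on $C$ and $5m$ inside are simultaneously tight. Apart from that bookkeeping, the argument is a verbatim translation of Thomassen's proof with ``color'' replaced by ``$m$-set of colors'' and ``disjoint from'' meaning set-disjointness rather than distinctness.
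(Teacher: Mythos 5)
Your proof is correct and follows essentially the same route as the paper's: the same induction on $\norm{V(G)}$, the same chord/no-chord case split, and the same device of reserving a $2m$-color pool at $v_p$ (your $S$, the paper's $X$), deleting it from the interior neighbors' lists, and recursing on $G \grminus \set{v_p}$. The only differences are cosmetic (you use $k$ for the number of internal neighbors of $v_p$, where the paper reuses $m$, and you spell out the trivial base-case check).
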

\begin{proof}
  (by induction on the number of vertices of $G$).
  If $p=3$ and $G=C$ there is nothing to prove.
  So we proceed to the induction step.

  If $C$ has a chord $v_i v_j$ where $2 \leq i \leq j-2 \leq p-1$ ($v_{p+1} = v_1$), then we apply the induction hypothesis to the cycle $v_1 v_2 \cdots v_i v_j v_{j+1} \cdots v_1$ and its interior and then to $v_j v_i v_{i+1} \cdots v_{j-1} v_j$ and its interior.
  So we can assume that $C$ has no chord.

  Let $v_1, u_1, u_2, \ldots, u_m, v_{p-1}$ be the neighbors of $v_p$ in that clockwise order around $v_p$.
  As the interior of $C$ is triangulated, $G$ contains the path $P: v_1 u_1 u_2 \cdots u_m v_{p-1}$.
  As $C$ is chordless, $P \cup (C-v_p)$ is a cycle $C'$.
  Let $X$ be a set of $2m$ distinct colors in $L(v_p)\setminus\set{1,\ldots,m}$.
  Now define $L'(u_i) = L(u_i)\setminus X$ for $1 \leq i \leq m$ and $L'(v)=L(v)$ if $v$ is a vertex of $G$ not in $\set{u_1, u_2, \ldots, u_m}$.
  Then we apply the induction hypothesis to $C'$ and its interior and the new list $L'$.
  We complete the coloring by assigning $m$ colors from $X$ to $v_p$ such that $v_p$ and $v_{p-1}$ get disjoint sets of colors.
\end{proof}
\end{document}